\documentclass[11pt,letterpaper]{amsart}

\setlength{\pdfpagewidth}{\paperwidth}
\setlength{\pdfpageheight}{\paperheight} 

\usepackage{fullpage}
\usepackage{enumitem}
\usepackage{amsmath,amsthm,amssymb}

\newtheorem{theorem}{Theorem}[section]
\newtheorem{proposition}[theorem]{Proposition}
\newtheorem{lemma}[theorem]{Lemma}

\numberwithin{equation}{section}

\allowdisplaybreaks[1]

\renewcommand{\Re}{{\rm Re}}
\renewcommand{\Im}{{\rm Im}}
\DeclareMathOperator{\E}{E}
\newcommand{\C}{\mathbb{C}}
\newcommand{\R}{\mathbb{R}}
\newcommand{\e}{\epsilon}
\newcommand{\w}{\omega}
\newcommand{\z}{\zeta}
\newcommand{\norm}[1]{\lVert#1\rVert}
\newcommand{\abs}[1]{\lvert#1\rvert}
\newcommand{\bigabs}[1]{\big\lvert#1\big\rvert}
\newcommand{\Bigabs}[1]{\Big\lvert#1\Big\rvert}
\newcommand{\biggabs}[1]{\bigg\lvert#1\bigg\rvert}
\newcommand{\Biggabs}[1]{\Bigg\lvert#1\Bigg\rvert}
\newcommand{\leg}[2]{({#1}\!\mid\!{#2})}
\newcommand{\bigleg}[2]{\big({#1}\,\big|\,{#2}\big)}
\newcommand{\V}{\mathcal{V}}
\newcommand{\U}{\mathcal{U}}
\newcommand{\stack}[2]{\genfrac{}{}{0pt}{}{#1}{#2}}


\begin{document}

\title{The $L_4$ norm of Littlewood polynomials\\ derived from the Jacobi symbol}

\author{Jonathan Jedwab \and Kai-Uwe Schmidt}

\date{7 September 2010 (revised 4 August 2011 and 29 June 2012)}

\subjclass[2010]{Primary: 11B08, 11B83; Secondary: 94A55}

\keywords{}

\thanks{J.~Jedwab is with Department of Mathematics, Simon Fraser University, 8888 University Drive, Burnaby BC V5A 1S6, Canada. Email: {\tt jed@sfu.ca}.}

\thanks{K.-U.~Schmidt was with Department of Mathematics, Simon Fraser University and is now with Faculty of Mathematics, Otto-von-Guericke University, Universit\"atsplatz~2, 39106 Magdeburg, Germany. Email: {\tt kaiuwe.schmidt@ovgu.de}}

\thanks{J.~Jedwab is supported by NSERC of Canada.}

\thanks{K.-U. Schmidt is supported by German Research Foundation.}

\begin{abstract}
Littlewood raised the question of how slowly the $L_4$ norm $\norm{f}_4$ of a Littlewood polynomial $f$ (having all coefficients in $\{-1,+1\}$) of degree $n-1$ can grow with $n$. We consider such polynomials for odd square-free $n$, where $\phi(n)$ coefficients are determined by the Jacobi symbol, but the remaining coefficients can be freely chosen. When $n$ is prime, these polynomials have the smallest published asymptotic value of the normalised $L_4$ norm $\norm{f}_4/\norm{f}_2$ among all Littlewood polynomials, namely $(7/6)^{1/4}$. When $n$ is not prime, our results show that the normalised $L_4$ norm varies considerably according to the free choices of the coefficients and can even grow without bound. However, by suitably choosing these coefficients, the limit of the normalised $L_4$ norm can be made as small as the best published value $(7/6)^{1/4}$. 
\end{abstract}

\maketitle


\section{Introduction}
\label{sec:intro}

For real $\alpha\ge 1$, the $L_\alpha$ norm of a  polynomial $A\in\C[z]$ on the unit circle is given by
\[
\norm{A}_\alpha:=\bigg(\frac{1}{2\pi}\int_0^{2\pi}\bigabs{A(e^{i\theta})}^\alpha d\theta\bigg)^{1/\alpha}.
\]
The polynomial $A(z)=\sum_{j=0}^{n-1}a_jz^j$ is called a \emph{Littlewood polynomial} if $a_j\in\{-1,+1\}$ for each~$j$. In 1966, Littlewood~\cite[\S~6]{Littlewood1966} raised the question of how slowly the $L_4$ norm of a Littlewood polynomial of degree $n-1$ can grow with $n$. An equivalent question was posed by Turyn~\cite[p.~199]{Turyn1968} in a different context. Littlewood's question is closely related to other classical problems involving norms of Littlewood polynomials~\cite{Newman1960}, \cite{Erdos1962}, \cite{Littlewood1968}, \cite{Newman1990}, \cite{Beck1991}, \cite{Borwein2002a}.
\par
For a polynomial $A\in\C[z]$, a small $L_4$ norm corresponds to a large \emph{merit factor}, defined as
\[
F(A):=\frac{\norm{A}_2^4}{\norm{A}_4^4-\norm{A}_2^4}
\]
provided that the denominator is nonzero. This normalised measure appears natural since it often attains an integer value when the polynomial degree tends to infinity. Littlewood's question concerns the growth rate of $F(A)$ since $\norm{A}_2^4=n^2$ for every Littlewood polynomial of degree $n-1$. The determination of the largest possible merit factor of Littlewood polynomials of large degree is also of importance in the theory of communications, where Littlewood polynomials with large merit factor correspond to signals whose energy is very evenly distributed over frequency~\cite{Beenker1985}, and in theoretical physics, where Littlewood polynomials with largest merit factor correspond to the ground states of Bernasconi's Ising spin model~\cite{Bernasconi1987}.
\par
If $A$ is drawn uniformly from the set of Littlewood polynomials of degree $n-1$, then $F(A)\to1$ in probability as $n\to\infty$~\cite{Borwein2001a}. Littlewood~\cite{Littlewood1968} constructed a sequence of Littlewood polynomials with asymptotic merit factor~$3$. Since then Littlewood's question has been attacked by mathematicians, engineers, and physicists (see~\cite{Jedwab2005} for a survey of results and historical developments).
\par
Given a polynomial $A\in\C[z]$ of degree $n-1$ and real $r$, define the \emph{rotation} $A_r$ of $A$ by
\begin{equation}
A_r(z):=z^{-\lfloor nr\rfloor}A(z)\bmod{(z^n-1)}.   \label{eqn:def_rot}
\end{equation}
For odd $n$, let $\leg{\,\cdot\,}{n}$ be the Jacobi symbol (see~\cite{Apostol1976}, for example), and call
\[
J(z):=\sum_{j=1}^{n-1}\leg{j}{n}\,z^j
\]
the \emph{character polynomial} of degree $n-1$. For prime~$n$, this polynomial is known as the \emph{Fekete polynomial}, which has been studied extensively and whose asymptotic merit factor has been determined for all rotations (see~\cite{Montgomery1980}, \cite{Hoholdt1988}, \cite{Conrey2000}, \cite{Borwein2001b}, \cite{Borwein2002}, for example). Indeed, defining
\begin{equation}
f(r):=\begin{cases}
\dfrac{1}{\frac{1}{6}+8\left(\abs{r}-\frac{1}{4}\right)^2} & \mbox{for $-\frac{1}{2}<r\le\frac{1}{2}$}\\[3ex]
f(r+1) & \mbox{otherwise},
\end{cases}
\label{eqn:frdefn}
\end{equation}
the following result is known.
\begin{theorem}[{H{\o}holdt and Jensen~\cite{Hoholdt1988}}]
\label{thm:hj}
Let $p$ take values in an infinite set of odd primes, and let $r$ be real. Let $X=J+1$, where $J$ is the character polynomial of degree $p-1$. Then
\[
\lim_{p\to\infty}F(X_r)=f(r).
\]
\end{theorem}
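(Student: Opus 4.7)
The plan is to reduce the claim to an asymptotic evaluation of the aperiodic autocorrelations of $X_r$. Since $X_r$ is a Littlewood polynomial of degree $p-1$, $\norm{X_r}_2^2=p$; writing $C_k(A):=\sum_{j=0}^{n-1-k}a_ja_{j+k}$ for the $k$-th aperiodic autocorrelation of $A(z)=\sum_{j=0}^{n-1}a_jz^j$, the standard identity $\norm{A}_4^4=\norm{A}_2^4+2\sum_{k=1}^{n-1}C_k(A)^2$ yields
\[
F(X_r)=\frac{p^2}{2\sum_{k=1}^{p-1}C_k(X_r)^2}.
\]
So the theorem reduces to showing
\[
\frac{1}{p^2}\sum_{k=1}^{p-1}C_k(X_r)^2\;\longrightarrow\;\frac{1}{12}+4\bigl(\abs{r}-\tfrac14\bigr)^2.
\]

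The coefficients of $X$ are $x_0=1$ and $x_j=\leg{j}{p}$ for $1\le j\le p-1$, and those of $X_r$ are $y_j=x_{(j+s)\bmod p}$ with $s:=\lfloor pr\rfloor$. Since $x_0=1$ differs from $\leg{0}{p}=0$ only at one position, this modification contributes $O(1)$ to each $C_k(X_r)$, so up to this error
\[
C_k(X_r)=\sum_{j=0}^{p-1-k}\leg{(j+s)\bmod p}{p}\leg{(j+s+k)\bmod p}{p}.
\]
Splitting the range so that neither residue wraps around zero, this becomes a short sum of incomplete character sums $\sum_{a\in I}\leg{a(a+k)}{p}$ over sub-intervals $I\subset\{1,\ldots,p-1\}$ whose endpoints depend on $s$ and $k$. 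The corresponding complete sum satisfies $\sum_{a=1}^{p-1}\leg{a(a+k)}{p}=-1$ for $1\le k\le p-1$, as one sees via the substitution $u=ka^{-1}\bmod p$, which reduces it to $\sum_{u\ne 0}\leg{1+u}{p}=-1$.

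Expanding $C_k(X_r)^2$ and summing over $k$ yields a double character sum. After the change of variable $t=k/p\in(0,1)$ and using $s/p\to r$, the diagonal contribution becomes a Riemann sum for the integral of an explicit piecewise-quadratic function of $t$ and $r$. Evaluating this integral (splitting according to the signs of $r\pm\tfrac14$) gives the claimed value $\tfrac{1}{12}+4(\abs{r}-\tfrac14)^2$. The off-diagonal contributions involve character sums of $\leg{\cdot}{p}$ applied to products of several linear factors and can be shown to be $o(p^2)$.

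The principal obstacle is controlling these off-diagonal cross-terms: a direct application of P\'olya--Vinogradov to each $C_k(X_r)$ gives only $O(\sqrt p\,\log p)$, whose square summed over $k$ produces an error of $O(p^2\log^2p)$, far too weak. The necessary savings must come from cancellation in $k$, accessible through either Parseval on $\mathbb{Z}/p\mathbb{Z}$ or the Weil bound for the joint character sum in the variables $(a,k)$, which is the technical heart of the argument. Once such estimates are in place, the passage from the discrete sum to the continuous integral, and the integral's evaluation, are routine.
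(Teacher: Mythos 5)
Your reduction to $F(X_r)=p^2\big/\big(2\sum_{k=1}^{p-1}C_k(X_r)^2\big)$ and the target value $\tfrac{1}{12}+4(\abs{r}-\tfrac14)^2$ are both correct, but the step that is supposed to produce this value contains a structural error, not merely a deferred estimate. Expand $C_k(X_r)^2$ as a double sum over $j,j'$. The diagonal $j=j'$ evaluates the character at a square, so it contributes $p-k+O(1)$ for each $k$, and summing over $k$ gives $\tfrac12p^2+O(p)$ --- a Riemann sum for the piecewise \emph{linear} function $1-t$, with no dependence on $r$ at all. Since the true limit of $p^{-2}\sum_kC_k(X_r)^2$ equals $\tfrac13$ at $r=0$ and $\tfrac1{12}$ at $r=\tfrac14$, the off-diagonal terms necessarily contribute at the main-term level (for instance $-\tfrac16p^2+o(p^2)$ at $r=0$) and must carry the entire dependence on $r$. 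Your assertion that ``the off-diagonal contributions \ldots can be shown to be $o(p^2)$'' is therefore false, and the announced integral evaluation of the diagonal cannot yield the claimed piecewise-quadratic limit.

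Beyond that, even where the outline points in a workable direction, you explicitly defer what you yourself call ``the technical heart of the argument'': the cancellation in $k$ is said to be ``accessible through either Parseval \ldots{} or the Weil bound'' but is never carried out, so no error term better than the useless $O(p^2\log^2p)$ is actually established. For comparison, the route taken here (Theorem~\ref{thm:hj} is the $\w(n)=1$ case of Theorem~\ref{thm:ternary}, proved via Theorem~\ref{thm:product-mainres}) avoids incomplete character sums altogether: Lemma~\ref{lem:sum-ccf} expresses $\norm{A}_4^4$ through the sums $\Lambda_A(j,k,\ell)$ of products of four values of $A$ at $p$th roots of unity; the Gauss-sum identity \eqref{eqn:magTe_rot} makes $\abs{J_r(\z_p^a)}$ constant in $a$, so every $\Lambda$ collapses to a \emph{complete} character sum evaluated exactly by Lemma~\ref{lem:RX}, and the rotation $r$ enters only through the elementary exponential sum \eqref{eqn:exp-sum-identity}. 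That device is precisely what replaces the missing off-diagonal analysis in your plan; without it (or a genuine Parseval/Weil argument in the $(j,j',k)$ variables), the proposal is a programme rather than a proof.
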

\par
Borwein and Choi~\cite{Borwein2002} also calculated the exact, rather than the asymptotic, values of $F(X)$ and $F(X_{1/4})$ by refining the proof of Theorem~\ref{thm:hj}. The largest asymptotic merit factor occurring in Theorem~\ref{thm:hj} is~$6$. The polynomial $X$ of degree $p-1$ in Theorem~\ref{thm:hj} has been used to construct Littlewood polynomials of degree $2p-1$~\cite{Xiong2008} and $4p-1$~\cite{Schmidt2009} that also have asymptotic merit factor $6$, and the value~$6$ remains the largest published asymptotic merit factor for all sequences of Littlewood polynomials. H{\o}holdt and Jensen~\cite{Hoholdt1988} conjectured that no larger value is possible, although there are various contradicting opinions~\cite[p.~29]{Littlewood1968}, \cite{Golay1982}, \cite{Borwein2004}. In contrast, there are sequences of polynomials, not all of whose coefficients lie in $\{-1,+1\}$, for which the merit factor grows without bound as the degree increases~\cite[\S~6]{Littlewood1966}.
\par
In this paper we study the case when $n$ is square-free but not prime. The character polynomial $J$ of degree $n-1$ has $\phi(n)$ nonzero coefficients since $\leg{j}{n}=0$ exactly when $\gcd(j,n)>1$. Define
\[
\V_n:=\bigg\{\sum_{j=0}^{n-1}v_jz^j:v_j\in\{0,-1,+1\}~\text{and $v_j=0\Leftrightarrow\gcd(j,n)=1$}\bigg\}.
\]
The polynomial $J+V$ is then a Littlewood polynomial for each $V\in\V_n$, and we call $J+V$ a \emph{Littlewood completion} of~$J$. We wish to determine the choice of $V\in\V_n$ for each $n$ and the choice of $r$ that maximise the asymptotic merit factor of $J_r+V_r$. In the case when $n$ is prime, there are only two possible Littlewood completions of $J$, namely $J+1$ and $J-1$. Theorem~\ref{thm:hj} deals with $J+1$, and it is readily seen that the same result holds for $J-1$. However, for general~$n$ there are $2^{n-\phi(n)}$ possible Littlewood completions of $J$. The choice of the Littlewood completion and rotation that maximise the asymptotic merit factor is then by no means obvious and the analysis is considerably more difficult.


\section{Results}

Throughout this paper, we will use the following notation. For integer~$n>1$, we define $p_n$ to be the smallest prime factor of $n$ and, as usual, $\w(n)$ denotes the number of distinct prime factors of~$n$.
\par
As a starting point we establish the asymptotic merit factor of the character polynomial $J$ itself at all rotations.
\begin{theorem}
\label{thm:ternary}
Let $n$ take values only in an infinite set of odd square-free integers greater than~$1$, where
\begin{equation}
\frac{(\log n)^3}{p_n}\to 0 \quad\mbox{as $n\to\infty$},
\label{eqn:logn2p1}
\end{equation}
and let $r$ be real. Let $J$ be the character polynomial of degree $n-1$. Then
\[
\lim_{n\to\infty} F(J_r)=f(r).
\]
\end{theorem}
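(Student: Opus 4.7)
The plan is to follow the blueprint laid down by H{\o}holdt and Jensen for the prime case (Theorem~\ref{thm:hj}), adapted to handle the vanishing of the Jacobi symbol on residues sharing a prime factor with the composite modulus~$n$. I would first express the merit factor in terms of aperiodic autocorrelations: since $J$ has exactly $\phi(n)$ nonzero coefficients and a cyclic rotation preserves their number, $\norm{J_r}_2^2=\phi(n)$. Writing $C_u^{(r)}$ for the aperiodic autocorrelation of $J_r$ at shift~$u$ and using the standard identity $\norm{A}_4^4=C_0^2+2\sum_{u=1}^{n-1}C_u^2$ for real-coefficient~$A$, the claim reduces to
\[
\frac{1}{\phi(n)^2}\sum_{u=1}^{n-1}\bigabs{C_u^{(r)}}^2 \longrightarrow \frac{1}{12}+4\Big(\abs{r}-\tfrac14\Big)^2.
\]
Under the hypothesis one has $\phi(n)/n=\prod_{p\mid n}(1-1/p)\to 1$, because $\log(\phi(n)/n)\ge -2\w(n)/p_n$ and $\w(n)\ll\log n$, so the normalisation may equivalently be taken as~$n^2$.

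Next I would decompose each $C_u^{(r)}$ into at most two incomplete character sums of the form
\[
T(a,b;u):=\sum_{j=a}^{b}\leg{j}{n}\leg{j+u}{n},
\]
since the cyclic shift by $s:=\lfloor nr\rfloor$ breaks the aperiodic sum into contiguous sub-ranges of the periodic sequence $j\mapsto\leg{j}{n}\leg{j+u}{n}$. For $u$ coprime to~$n$ the substitution $j\mapsto uj$ reduces $\leg{j(j+u)}{n}$ to $\leg{j(j+1)}{n}$, whose complete periodic sum equals $(-1)^{\w(n)}$ by multiplicativity and the classical count of consecutive quadratic residues modulo each prime divisor of~$n$; this plays the role of the $-1$ that appears in the prime case. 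The incomplete sum $T(a,b;u)$ is bounded by $O(\sqrt n\,\log n)$ through the standard completing-the-sum argument, the completed sum being of size $O(\sqrt n)$ after CRT-factoring and the Weil estimate on each prime component.

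The asymptotic evaluation then proceeds by substituting these expressions into $\sum_{u=1}^{n-1}\bigabs{C_u^{(r)}}^2$ and isolating the main contribution, which comes from the part of $C_u^{(r)}$ depending linearly on $u/n$ and on the fractional part of $s/n$. After reindexing with $t=u/n\in(0,1)$, the leading double sum becomes a Riemann sum whose limit is the same integral that H{\o}holdt and Jensen identify for the Fekete polynomial, and whose closed-form value is $\tfrac{1}{12}+4(\abs{r}-\tfrac14)^2$; that step therefore carries over verbatim once the character sums have been put in the correct form.

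The main obstacle is to absorb the \emph{composite-$n$ corrections} arising because $\leg{j}{n}\leg{j+u}{n}=\leg{j(j+u)}{n}$ requires both $j$ and $j+u$ to be coprime to~$n$. The number of $j\in[0,n-1]$ that fails either condition for a given $u$ is $O(n\w(n)/p_n)$, and after squaring $C_u^{(r)}$ and summing over $u$ the cumulative error from these bad terms is bounded by $O(n^2\w(n)(\log n)^2/p_n)=O(n^2(\log n)^3/p_n)$, which is $o(n^2)$ precisely under the hypothesis~\eqref{eqn:logn2p1}. A parallel and smaller contribution from shifts $u$ with $\gcd(u,n)>1$ is handled by the trivial bound $\bigabs{C_u^{(r)}}\le\phi(n)$ together with the count $n-\phi(n)=o(n)$ of such~$u$. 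Verifying that these errors close within the stated regime is the heart of the argument.
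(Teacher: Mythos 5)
Your plan genuinely departs from the paper's proof. The paper does not work with aperiodic autocorrelations directly; it applies the H{\o}holdt--Jensen ``frequency-domain'' machinery (Lemma~\ref{lem:sum-ccf}), expressing $\norm{J_r}_4^4/n^2$ through evaluations of $J_r$ at $n$th roots of unity, where $\abs{J_r(\z_n^j)}$ is exactly $n^{1/2}$ or $0$ by the Gauss-sum identity~\eqref{eqn:magTe_rot}. The main and error terms then come from exact evaluations (the Ramanujan sum in Lemma~\ref{lem:Ramanujan}, the identity~\eqref{eqn:exp-sum-identity}), weighted by factors $1/\abs{1-\z_n^k}$ whose sum is only $O(n\log n)$. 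Your time-domain approach, via $\norm{A}_4^4=C_0^2+2\sum_u C_u^2$ and incomplete character sums, avoids none of those tools in spirit but replaces the exact evaluations with completing-the-sum plus Weil; this is a different and, as written, more lossy route.

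There are concrete gaps. First, you dispose of the shifts $u$ with $\gcd(u,n)>1$ by ``the trivial bound $\abs{C_u^{(r)}}\le\phi(n)$ together with the count $n-\phi(n)=o(n)$.'' This does not close: their contribution to $\phi(n)^{-2}\sum_u \abs{C_u^{(r)}}^2$ is then at most $\psi(n)=n-\phi(n)$, and $\psi(n)\ge n/p_n\to\infty$ whenever $\w(n)\ge 2$. You need the Ramanujan-sum evaluation $\abs{\sum_j\leg{j}{n}\leg{j+u}{n}}=\phi(\gcd(u,n))\le n/p_n$ (Lemma~\ref{lem:RX}) rather than the trivial bound, and even then the resulting error is of order $\psi(n)/p_n^2\ll n\log n/p_n^3$, which requires roughly $p_n\gg (n\log n)^{1/3}$ --- a far stronger hypothesis than $(\log n)^3/p_n\to 0$. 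Second, the completed twisted sums behind your $O(\sqrt n\,\log n)$ bound factor via CRT into $\w(n)$ Weil estimates, so they carry a $2^{\w(n)}$ multiplier; under the stated hypothesis $\w(n)$ can be as large as $\log n/(3\log\log n)$, making $2^{\w(n)}$ exceed any fixed power of $\log n$, which you never control. Third, the claimed cumulative error $O(n^2(\log n)^3/p_n)$ from the ``bad $j$'' terms is asserted without derivation; the natural Cauchy--Schwarz estimate gives $\sum_u\abs{C_u}\,E_u\ll n^{5/2}\w(n)/p_n$, which again needs $p_n\gg n^{1/2}\log n$. These obstructions are exactly what Lemma~\ref{lem:sum-ccf} is designed to sidestep, and filling them in the time domain would amount to a substantially different (and harder) argument than the one you sketch.
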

\par
We next examine the special Littlewood completion $J+V$ of~$J$ in which each nonzero coefficient of $V$ is chosen to be~$+1$.
\begin{theorem}
\label{thm:all-ones-mainres}
Let $n$ take values only in an infinite set of odd square-free integers greater than~$1$ and let $r$ be real. Let $J$ be the character polynomial of degree $n-1$ and define
\[
V(z)=\sum_{\stack{j=0}{\gcd(j,n)>1}}^{n-1}z^j.
\]
Then
\begin{equation}
\liminf_{n\to\infty}\frac{1}{F(J_r+V_r)}\ge\liminf_{n\to\infty}\frac{1}{F(J_r)}+\liminf_{n\to\infty}\frac{n}{2p_n^3}.   \label{eqn:liminf_all-ones}
\end{equation}
\end{theorem}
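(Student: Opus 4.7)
The plan is to decompose $1/F(J_r+V_r) - 1/F(J_r)$ into a main positive contribution of order $n/p_n^3$ from the autocorrelations of $V_r$, plus cross and rescaling terms that are shown to be of strictly lower order. The main device is to pass to rotation-invariant periodic autocorrelations in order to lower bound the (rotation-dependent) aperiodic autocorrelations of $V_r$.

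First, using $\|A\|_4^4 = C_0(A)^2 + 2\sum_{k=1}^{n-1} C_k(A)^2$ (with $C_k$ the aperiodic autocorrelation at lag $k$), $\|J_r+V_r\|_2^2=n$, $\|J_r\|_2^2=\phi(n)$, and the bilinearity $C_k(J_r+V_r)=C_k(J_r)+C_k(V_r)+M_k$ (where $M_k=M_k(J_r,V_r)$ is the mutual aperiodic correlation), we arrive at
\begin{equation*}
\frac{1}{F(J_r+V_r)} - \frac{1}{F(J_r)} = \Bigl(\frac{\phi(n)^2}{n^2}-1\Bigr)\frac{1}{F(J_r)} + \frac{2}{n^2}\sum_{k=1}^{n-1}\bigl(C_k(V_r)+M_k\bigr)^2 + \frac{4}{n^2}\sum_{k=1}^{n-1} C_k(J_r)\bigl(C_k(V_r)+M_k\bigr).
\end{equation*}

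The main step is a lower bound on $\sum_{k=1}^{n-1} C_k(V_r)^2$. Since $P_k(V)=C_k(V_r)+C_{n-k}(V_r)$ for any rotation $r$, Cauchy--Schwarz gives the rotation-invariant bound $\sum_{k=1}^{n-1} C_k(V_r)^2 \ge \tfrac{1}{4}\sum_{k=1}^{n-1} P_k(V)^2$. Parseval on $\mathbb{Z}/n\mathbb{Z}$ yields
\begin{equation*}
\sum_{k=0}^{n-1} P_k(V)^2 = \frac{1}{n}\sum_{j=0}^{n-1} |V(\omega^j)|^4,
\end{equation*}
where $\omega$ is a primitive $n$-th root of unity. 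A direct evaluation of $V$ at the $n$-th roots of unity gives $V(\omega^j) = -\phi(\gcd(j,n))\,\mu(n/\gcd(j,n))$ for $j\not\equiv 0\pmod n$ and $V(1)=n-\phi(n)$. Grouping by $g=\gcd(j,n)$ yields $\sum_{j\ne 0}|V(\omega^j)|^4 = \sum_{g\mid n,\,g<n}\phi(g)^4\phi(n/g)$; the dominant term $g=n/p_n$ (contributing $(n/p_n)^4(p_n-1)$ up to Euler-product corrections) gives $\sum_{k=1}^{n-1}P_k(V)^2 \ge (1+o(1))\,n^3/p_n^3$, and therefore $\frac{2}{n^2}\sum_{k=1}^{n-1}C_k(V_r)^2 \ge (1+o(1))\,n/(2p_n^3)$.

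The remaining terms are of lower order. The rescaling term is nonpositive with magnitude $O(\omega(n)/p_n)$, since $1-\phi(n)^2/n^2\le 2\omega(n)/p_n$ and $1/F(J_r)$ is bounded along any subsequence attaining the liminf of the left-hand side. The mutual correlation $M_k$, after decomposing $V_r$ via M\"obius inversion into indicators of arithmetic progressions, is a Jacobi-symbol character sum along shifted progressions; P\'olya--Vinogradov yields $|M_k|=O(\omega(n)\sqrt{n}\log n)$ uniformly in $k$, whence $\sum_k M_k^2 = O(\omega(n)^2 n^2\log^2 n)$, and the $J_r$--$V_r$ cross term is then controlled by Cauchy--Schwarz against $\sum_k C_k(J_r)^2 = O(\phi(n)^2)$. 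The principal obstacle is verifying that these error bounds are $o(n/p_n^3)$ in every regime of $(n,p_n)$ relevant to the liminf; the liminf formulation is essential here, as it permits passing to subsequences and applying the sharpest available estimate (or in the degenerate regime where $n/p_n^3$ is bounded, a direct sign-tracking argument on the cross term) in each growth regime.
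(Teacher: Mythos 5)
Your decomposition is algebraically correct, and your lower bound for the main term is in substance the same as the paper's: the paper lower-bounds $\norm{V_r}_4^4/n^2$ by restricting to the $p_n-1$ roots of unity $\zeta_n^{un/p_n}$, where Ramanujan's sum gives $V(\zeta_n^{un/p_n})=\phi(n/p_n)$, yielding $\norm{V_r}_4^4/n^2\ge \tfrac{n}{2p_n^3}(\phi(n)/n)^4$; your route through $\sum_k P_k(V)^2=\tfrac1n\sum_j\abs{V(\zeta_n^j)}^4$ and the dominant divisor $g=n/p_n$ is the same computation, and the Cauchy--Schwarz passage from periodic to aperiodic autocorrelations is fine.

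The gap is in the cross terms, and it bites precisely in the critical regime $p_n\asymp n^{1/3}$, where the theorem has real content (main term $n/(2p_n^3)\asymp 1$). Cauchy--Schwarz against $\sum_k C_k(J_r)^2=O(n^2)$ bounds the $J$--$V$ cross term by $\tfrac4{n^2}\cdot O(n)\cdot(\sum_kC_k(V_r)^2)^{1/2}$; but $\sum_kC_k(V_r)^2\gtrsim n^3/p_n^3$ is forced by your own main-term bound, so this estimate is $\gtrsim n^{1/2}/p_n^{3/2}\asymp 1$ when $p_n\asymp n^{1/3}$ --- the same order as the main term, so it cannot be absorbed (and for, say, $n$ a product of three primes each $\asymp n^{1/3}$, $\sum_kC_k(J_r)^2$ really is $\Theta(n^2)$). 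Likewise your bound $\abs{M_k}=O(\omega(n)\sqrt n\log n)$ gives $\tfrac2{n^2}\sum_kM_k^2=O(\omega(n)^2(\log n)^2)$ and a correction $\tfrac4{n^2}\bigabs{\sum_kC_k(V_r)M_k}=O(\omega(n)^{5/2}n^{1/2}(\log n)/p_n^{3/2})$, neither of which is $o(1)$, let alone $o(n/p_n^3)$, in that regime (and the M\"obius decomposition actually involves $2^{\omega(n)}-1$ progressions, not $\omega(n)$). The deferred ``direct sign-tracking argument'' is not an argument: $\sum_kC_k(J_r)C_k(V_r)$ has no evident sign. The missing ingredient is the pointwise flatness of the character polynomial: the Gauss-sum identity $\abs{J(\zeta_n^j)}\in\{0,n^{1/2}\}$ together with Lagrange interpolation gives $\max_{\abs{z}=1}\abs{J_r(z)}\le 2n^{1/2}\log n$, and bounding the cross terms of $\abs{J_r+V_r}^4$ by H\"older against this sup norm (Proposition~\ref{prop:L4-sum}) yields an error $8p_n^{-1/2}n^{-1}(\log n)^{3/2}\norm{V_r}_4^2+58p_n^{-1/2}(\log n)^{7/2}$, which is $O(n^{-1/6}(\log n)^{7/2})\to 0$ when $p_n\asymp n^{1/3}$. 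You need this, or an equally strong use of $\abs{J_r}\approx n^{1/2}$ on the whole circle rather than merely in $L_2$; the case split between $p_n\le n^{2/7}$ and $p_n>n^{2/7}$ that you gesture at then closes the argument exactly as in the paper.
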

\par
Hence, if $p_n/n^{1/3}$ is bounded (which occurs for example if $\w(n)\ge 3$ for all sufficiently large~$n$), then
\[
\limsup_{n\to\infty}F(J_r+V_r)<\limsup_{n\to\infty}F(J_r),
\]
and if $p_n/n^{1/3}\to0$ (which occurs for example if $\w(n)\ge 4$ for all sufficiently large~$n$), then
\[
\lim_{n\to\infty}F(J_r+V_r)=0.
\]
Subject to the condition~\eqref{eqn:logn2p1}, we may replace $\liminf_{n\to\infty}1/F(J_r)$ in Theorem~\ref{thm:all-ones-mainres} by $1/f(r)$. Theorem~\ref{thm:all-ones-mainres} therefore shows that the asymptotic merit factor of $J_r+V_r$ can be strictly less than~$f(r)$ for all~$r$. This prompts the question of whether there is a choice of~$V$ for which the asymptotic merit factor of $J_r+V_r$ is \emph{greater} than $f(r)$ for some~$r$. However, we show that, subject to a mild condition on the growth rate of~$p_n$ relative to~$n$, there is no such~$V$.
\begin{theorem}
\label{thm:upper-bound-mainres}
Let $n$ take values only in an infinite set of odd square-free integers greater than~$1$, where
\begin{equation}
\frac{(\log n)^7}{p_n}\to 0 \quad\mbox{as $n\to\infty$},
\label{eqn:logn6p1-mainres}
\end{equation}
and let $r$ be real. Let $J$ be the character polynomial of degree $n-1$. Then
\[
\limsup_{n\to\infty}\,\max_{V\in\V_n}\,F(J_r+V_r)\le f(r).
\]
\end{theorem}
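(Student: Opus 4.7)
The plan is to show that $\|J_r + V_r\|_4^4 \ge \|J_r\|_4^4 - o(n^2)$ uniformly over $V\in\V_n$. Since the hypothesis $(\log n)^3/p_n \to 0$ of Theorem~\ref{thm:ternary} follows from ours, we have $\|J_r\|_4^4 = n^2(1+1/f(r)) + o(n^2)$ (using $\phi(n)^2 = n^2(1-o(1))$), and together with $\|J_r+V_r\|_2^4 = n^2$ this immediately yields $F(J_r+V_r) \le f(r) + o(1)$ uniformly.

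I would work in aperiodic autocorrelations, $\|A\|_4^4 = \sum_{k\in\mathbb{Z}} c_k(A)^2$ with $c_k(A) = \sum_j a_j a_{j+k}$. Decomposing $c_k(J_r+V_r) = c_k(J_r) + c_k(V_r) + c_k(J_r,V_r) + c_k(V_r,J_r)$ and expanding yields
\[
\|J_r+V_r\|_4^4 = \|J_r\|_4^4 + \|V_r\|_4^4 + 4\,S_{JV} + (\text{remaining bilinear cross terms}),
\]
where $S_{JV}:=\sum_k c_k(J_r,V_r)^2$, and one has $S_J:=\sum_k c_k(J_r)^2 = O(n^2)$, $S_V:=\sum_k c_k(V_r)^2 = \|V_r\|_4^4$. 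The critical identity
\[
\sum_{k\in\mathbb{Z}} c_k(A, B)^2 = \sum_{s\in\mathbb{Z}} c_s(A)\,c_s(B) = \|AB\|_2^2,
\]
obtained by an index swap followed by Parseval, converts $S_{JV}$ into the $L^2$-norm of the product polynomial on the unit circle: $S_{JV} \le \|J_r\|_\infty^2\,\|V_r\|_2^2$.

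The key estimate is $S_{JV} = o(n^2)$ uniformly in $V$. This requires a P\'olya--Vinogradov-type pointwise bound $\|J_r\|_\infty = O(\sqrt n\,(\log n)^c)$ for some absolute constant $c$, valid uniformly over odd square-free $n$ and rotations $r$: the cyclic shift defining $J_r$ turns partial sums of its coefficients into sums of Jacobi symbols over cyclic intervals, each a union of at most two linear intervals on which $\bigabs{\sum_{a<j\le b}\leg{j}{n}} = O(\sqrt n\log n)$ applies, and Abel summation controls the full trigonometric sum at the cost of further logarithmic factors. Combined with $\|V_r\|_2^2 = n-\phi(n) = O(n\log n/p_n)$, we obtain $S_{JV} = O(n^2(\log n)^{2c+1}/p_n)$, which is $o(n^2)$ under our hypothesis (the exponent~$7$ being $2c+1$ for $c=3$). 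Each of the remaining bilinear cross terms $\sum_k c_k(X)\,c_k(Y,Z)$ is then bounded by Cauchy-Schwarz by $\sqrt{S_J\cdot S_{JV}} = o(n^2)$, by $S_{JV} = o(n^2)$, or by $\sqrt{S_V\cdot S_{JV}}$.

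The main obstacle is the last type, since $S_V$ can be as large as $\|W\|_4^4 = O(n^3/p_n^3) \gg n^2$ for highly structured $V$ (such as the all-ones completion treated in Theorem~\ref{thm:all-ones-mainres}), where $W$ denotes the $\{0,1\}$-indicator polynomial of $\{j:\gcd(j,n)>1\}$. The resolution is an AM-GM argument: setting $\eta_n := S_{JV}/n^2 \to 0$ uniformly in $V$, one has $4\sqrt{S_V\,S_{JV}} \le 2(\lambda S_V + S_{JV}/\lambda)$ for any $\lambda>0$, and choosing $\lambda=\sqrt{\eta_n}$ gives $4\sqrt{S_V\,S_{JV}} \le 2\sqrt{\eta_n}\,S_V + 2\sqrt{\eta_n}\,n^2 = o(1)\,S_V + o(n^2)$. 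Assembling all contributions,
\[
\|J_r+V_r\|_4^4 \ge \|J_r\|_4^4 + (1-o(1))\,\|V_r\|_4^4 + 4S_{JV} - o(n^2) \ge \|J_r\|_4^4 - o(n^2)
\]
uniformly over $V\in\V_n$, completing the argument.
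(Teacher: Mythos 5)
Your overall architecture is sound and, once the pointwise bound on $J_r$ is in place, does prove the theorem; at bottom it is the same idea as the paper's route (expand the fourth power, control cross terms with a $\norm{J_r}_\infty$ bound, and discard the nonnegative $V$-contribution), but organised via aperiodic autocorrelations and Parseval rather than via the paper's Proposition~\ref{prop:L4-sum} and its dichotomy on the size of $\norm{V_r}_4^4$. Your identity $\sum_k c_k(A,B)^2 = \sum_s c_s(A)c_s(B) = \norm{AB}_2^2$ is correct, the AM--GM absorption of $\sqrt{S_V S_{JV}}$ is the right fix for the fact that $S_V$ can be $\gg n^2$ for structured $V$, and $\limsup F(J_r)\le f(r)$ is available from Theorem~\ref{thm:ternary}.

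The genuine gap is in your justification of the crucial estimate $\norm{J_r}_\infty = O\big(\sqrt n\,(\log n)^c\big)$. P\'olya--Vinogradov bounds the \emph{partial sums} $S(m)=\sum_{j\le m}a_j$ of the coefficient sequence, but Abel summation converts $J_r(e^{i\theta}) = \sum_j a_j e^{ij\theta}$ into $S(n-1)e^{i(n-1)\theta} - (e^{i\theta}-1)\sum_{j<n-1}S(j)e^{ij\theta}$, which yields only
\[
\bigabs{J_r(e^{i\theta})} \ll \sqrt n\,\log n\cdot\big(1 + n\,\abs{1-e^{i\theta}}\big);
\]
for $\theta$ bounded away from $0\pmod{2\pi}$ this is $O(n^{3/2}\log n)$, far too weak, and no accumulation of extra logarithmic factors repairs it. The needed $L^\infty$ bound is true, but its proof must pass through the Fourier-dual side: the paper evaluates $\abs{J_r(\z_n^k)} = n^{1/2}$ exactly at the $n$th roots of unity via Lemma~\ref{lem:gauss-sum} and then interpolates to the whole circle via Lemma~\ref{lem:poly-bound}, giving $\norm{J_r}_\infty \le 2n^{1/2}\log n$ (i.e.\ $c=1$) in~\eqref{eqn:bound_J_unitcircle}. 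If you substitute that bound for your P\'olya--Vinogradov/Abel sketch, your argument is complete; with $c=1$ your route in fact gives $S_{JV}=O\big(n^2(\log n)^3/p_n\big)$, and the stronger hypothesis $(\log n)^7/p_n\to 0$ is used in the paper to tame the H\"older error terms in Proposition~\ref{prop:L4-sum}, not the cross-correlation term you isolate.
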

\par
We then ask whether the deterioration in asymptotic merit factor obtained in Theorem~\ref{thm:all-ones-mainres} for a specific choice of~$V$ is typical of Littlewood completions of $J$. We show it is not: subject to the same condition~\eqref{eqn:logn6p1-mainres} as in Theorem~\ref{thm:upper-bound-mainres}, for almost all choices of~$V$ we have $F(J_r+V_r)\sim f(r)$.
\begin{theorem}
\label{thm:almost-all-mainres}
Let $n$ take values only in an infinite set of odd square-free integers greater than~$1$, where
\begin{equation}
\frac{(\log n)^7}{p_n}\to 0 \quad\mbox{as $n\to\infty$},   \label{eqn:logn6p1-2-mainres}
\end{equation}
and let $r$ be real. Let $J$ be the character polynomial of degree $n-1$ and let $V$ be drawn uniformly from~$\V_n$. Then, as $n\to\infty$,
\[
F(J_r+V_r)\to f(r)\quad\mbox{in probability}.
\]
\end{theorem}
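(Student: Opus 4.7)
The plan is to combine the deterministic upper bound on $F(J_r+V_r)$ given by Theorem~\ref{thm:upper-bound-mainres} (which applies under the same hypothesis~\eqref{eqn:logn6p1-2-mainres}) with an explicit computation of $\E[\norm{J_r+V_r}_4^4]$ under the uniform law on $\V_n$, and conclude via a one-sided Markov argument. Rewritten in terms of the $L_4$ norm, Theorem~\ref{thm:upper-bound-mainres} says that for every fixed $\e>0$ there exists $N=N(\e)$ such that for all $n\ge N$ and every $V\in\V_n$,
\[
\norm{J_r+V_r}_4^4 \ge n^2\bigl(1+1/(f(r)+\e)\bigr).
\]
This deterministic lower tail, combined with a matching bound on the expectation, will force concentration at the desired value.

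To compute the expectation, expand via aperiodic autocorrelations $C_k(\cdot)$. Writing $J_r=\sum_j b_j z^j$ and $V_r=\sum_j w_j z^j$, we have $b_j w_j=0$ for every $j$, and under the uniform law on $\V_n$ the nonzero $w_j$ form an i.i.d.\ symmetric $\pm 1$ sequence. The expansion
\[
C_k(J_r+V_r) = C_k(J_r) + D_k + C_k(V_r),
\]
with $D_k=\sum_j(b_j w_{j+k}+w_j b_{j+k})$ linear in $w$ and $C_k(V_r)=\sum_j w_j w_{j+k}$ quadratic, yields $\E[D_k]=\E[C_k(V_r)]=0$ for $k\ge1$ and $\E[D_k C_k(V_r)]=0$ (odd moment of a symmetric variable). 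Hence
\[
\E[C_k(J_r+V_r)^2] = C_k(J_r)^2 + \E[D_k^2] + \E[C_k(V_r)^2].
\]
The sum $\sum_{k=1}^{n-1}C_k(J_r)^2$ equals $(\norm{J_r}_4^4-\phi(n)^2)/2$, which by Theorem~\ref{thm:ternary} (whose hypothesis~\eqref{eqn:logn2p1} is implied by~\eqref{eqn:logn6p1-2-mainres}) equals $\phi(n)^2/(2f(r))+o(n^2)$. Since $n-\phi(n)\le n\w(n)/p_n=O(n\log n/p_n)=o(n)$ under~\eqref{eqn:logn6p1-2-mainres}, a direct count bounds $\sum_k\E[C_k(V_r)^2]$ by $\binom{n-\phi(n)}{2}=o(n^2)$, while an expansion of $\E[D_k^2]$ using $|b_j|\le 1$ gives $\sum_k\E[D_k^2]=O(n(n-\phi(n)))=o(n^2)$. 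Adding everything and using $\phi(n)^2=n^2(1+o(1))$ yields
\[
\E[\norm{J_r+V_r}_4^4] = n^2\bigl(1+1/f(r)\bigr)+o(n^2).
\]

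For the final step, fix $\tau>0$ and $\e>0$, and set $L_n=n^2(1+1/(f(r)+\e))$. By the first paragraph, $\norm{J_r+V_r}_4^4-L_n\ge 0$ eventually, and by the second paragraph it has expectation $n^2\Delta(\e)+o(n^2)$, where $\Delta(\e)=1/f(r)-1/(f(r)+\e)$. The event that $\norm{J_r+V_r}_4^4$ exceeds $n^2(1+1/f(r)+\tau)$ forces this nonnegative deviation to exceed $n^2(\tau+\Delta(\e))$, so Markov's inequality gives probability at most $(\Delta(\e)+o(1))/(\tau+\Delta(\e))$. Taking $\limsup_{n\to\infty}$ and then $\e\to 0$ drives this bound to $0$; the matching lower-tail event has probability zero for $\e$ small enough. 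This yields $F(J_r+V_r)\to f(r)$ in probability. The main technical obstacle is controlling the off-diagonal contributions to $\E[D_k^2]$, which mix Jacobi-symbol values $b_j b_{j+2k}$ with support-indicators for $w$; bounding $|b_j|\le 1$ and counting valid $(j,k)$ pairs suffices to absorb them into the $o(n^2)$ error.
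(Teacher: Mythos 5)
Your proof is correct, but it takes a genuinely different route from the paper's. The paper argues directly from Proposition~\ref{prop:L4-sum}: a Hoeffding-type large deviation bound (Lemma~\ref{lem:hoeffding}) combined with a Bernstein-type sup-norm estimate shows that $\norm{V_r}_4^4=O\big(p_n^{-2}n^2(\log n)^3\big)$ for a proportion $1-O(1/n)$ of $V\in\V_n$, and for such $V$ the error terms in Proposition~\ref{prop:L4-sum} tend to zero. You instead invoke the deterministic worst-case bound of Theorem~\ref{thm:upper-bound-mainres} as a black box (legitimate: its proof does not depend on the present theorem), compute $\E\norm{J_r+V_r}_4^4=n^2(1+1/f(r))+o(n^2)$ by an autocorrelation expansion, and squeeze via Markov's inequality applied to the nonnegative variable $\norm{J_r+V_r}_4^4-L_n$. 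Your moment computation checks out: the cross terms vanish by the sign symmetry of the law of $V$; $\sum_{k\ge1}\E[C_k(V_r)^2]$ counts pairs of support indices and equals $\binom{\psi(n)}{2}=o(n^2)$; and writing $D_k=\sum_{m}w_m(b_{m-k}+b_{m+k})$ gives $\E[D_k^2]\le 4\psi(n)$, hence $\sum_k\E[D_k^2]=O(n\,\psi(n))=o(n^2)$, both estimates resting only on $\psi(n)=O(p_n^{-1}n\log n)$ and the hypothesis~\eqref{eqn:logn6p1-2-mainres}. The trade-off: the paper's route is self-contained and yields an explicit exceptional set of measure $O(1/n)$ together with a quantitative bound on $\bigabs{1/F(J_r+V_r)-1/f(r)}$ off that set, whereas your route avoids the large-deviation machinery entirely, needing only first moments plus the already-established upper bound, at the cost of giving no rate of decay for the probability of the bad event (the $\e,\tau$ bookkeeping yields convergence in probability but nothing polynomial).
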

\par
In view of Theorem~\ref{thm:almost-all-mainres}, we wish to exhibit polynomials~$V\in\V_n$ satisfying $\lim_{n\to\infty} F(J_r+V_r)=f(r)$ under suitable conditions on the growth rate of~$p_n$ relative to~$n$. We present two such choices of polynomials~$V$. The first choice is given in the following theorem.
\begin{theorem}
\label{thm:product-mainres}
Let $n$ take values only in an infinite set of odd square-free integers greater than~$1$, where 
\begin{equation}
\frac{(\log n)^7}{p_n}\to 0 \quad\mbox{as $n\to\infty$},
\label{eqn:logn6p1-3-mainres}
\end{equation}
and let $r$ be real. Let $J$ be the character polynomial of degree $n-1$ and define
\begin{equation}
V(z)=\sum_{\stack{j=0}{\gcd(j,n)>1}}^{n-1}\bigleg{j}{\tfrac{n}{\gcd(j,n)}}\,z^j.   \label{eqn:LegV-mainres}
\end{equation}
Then
\[
\lim_{n\to\infty}F(J_r+V_r)=f(r).
\]
\end{theorem}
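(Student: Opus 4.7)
The upper bound $\limsup_{n\to\infty}F(J_r+V_r)\le f(r)$ is immediate from Theorem~\ref{thm:upper-bound-mainres}, whose hypothesis coincides with~\eqref{eqn:logn6p1-3-mainres}. My plan is therefore to establish the matching lower bound. Since Theorem~\ref{thm:ternary} applies under~\eqref{eqn:logn6p1-3-mainres} (which implies \eqref{eqn:logn2p1}) and yields $\norm{J_r}_4^4 \sim n^2(1+1/f(r))$, and since $\norm{J_r+V_r}_2^2 = n$ while $\norm{J_r}_2^2 = \phi(n) = n-o(n)$, the required lower bound reduces to
\[
\norm{J_r+V_r}_4^4 \;\le\; \norm{J_r}_4^4 + o(n^2).
\]
Expanding $|J_r+V_r|^4 - |J_r|^4$ and applying H\"older's inequality to each resulting cross term gives
\[
\bigabs{\norm{J_r+V_r}_4^4 - \norm{J_r}_4^4} \;\le\; C\sum_{j=1}^{4}\norm{J_r}_4^{4-j}\,\norm{V_r}_4^j,
\]
so, using $\norm{J_r}_4 = O(\sqrt n)$ from Theorem~\ref{thm:ternary}, the task further reduces to proving $\norm{V_r}_4 = o(\sqrt n)$ uniformly in $r$.

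To estimate $\norm{V_r}_4$ I would exploit the multiplicativity $\leg{j}{n/\gcd(j,n)}=\prod_{p\mid n,\,p\nmid j}\leg{j}{p}$, valid for square-free $n$. Grouping coefficients of $J+V$ by the set $\{p\mid n:p\mid j\}$ produces the divisor decomposition
\[
J+V \;=\; \sum_{d\mid n}\leg{d}{n/d}\,J^{(n/d)}(z^d),
\]
where $J^{(m)}$ denotes the character polynomial of degree $m-1$ (with $J^{(1)}:=1$). The summands have pairwise disjoint coefficient supports; the $d=1$ term is $J$ itself, and the others together comprise $V$. For each $d\mid n$ with $d>1$, the polynomial $J^{(n/d)}$ is a character polynomial at a strictly smaller modulus whose smallest prime factor is at least $p_n$, so (a uniform version of) Theorem~\ref{thm:ternary} applies and gives $\norm{J^{(n/d)}}_4 = O(\sqrt{n/d})$. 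In the unrotated case, Minkowski's inequality together with $\norm{J^{(n/d)}(z^d)}_4=\norm{J^{(n/d)}}_4$ (by the change of variable $\theta\mapsto d\theta$) yields
\[
\norm{V}_4 \;\le\; \sum_{d\mid n,\,d>1}\norm{J^{(n/d)}}_4 \;\le\; C\sqrt{n}\Bigl(\prod_{p\mid n}(1+1/\sqrt{p})-1\Bigr),
\]
and the right-hand side is $o(\sqrt{n})$ under~\eqref{eqn:logn6p1-3-mainres} because $\sum_{p\mid n}1/\sqrt{p}\le\w(n)/\sqrt{p_n}\to 0$.

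The hard part will be extending this bound from $\norm{V}_4$ to $\norm{V_r}_4$ uniformly in $r$, because the $L_4$ norm is not invariant under the cyclic-shift operation $A\mapsto A_r$ and the divisor decomposition of $V$ does not transport cleanly through this shift. I would address this by writing $V_r(z) = z^{-s}H(z) + z^{n-s}L(z)$ with $s=\lfloor nr\rfloor$, where $L$ and $H$ are the low- and high-index pieces of $V$ relative to the cut at position $s$; the triangle inequality then reduces the problem to bounds on $\norm{L}_4$ and $\norm{H}_4$. Each such restriction admits a truncated version of the above divisor decomposition, and the resulting truncated character polynomials can be handled by character sum estimates (for instance P\'olya--Vinogradov applied to partial sums of Jacobi symbols modulo $n/d$) to control their $L_\infty$ norms and hence, by interpolation with the $L_2$ norm, their $L_4$ norms. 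The hypothesis $(\log n)^7/p_n \to 0$ supplies the slack needed both to absorb the $2^{\w(n)}$ divisor-term overhead and to accommodate the logarithmic losses inherent in the character sum bounds.
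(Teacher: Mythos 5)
Your outline shares the paper's central computation --- the divisor decomposition of $J+V$ into pieces $\leg{d}{n/d}J^{(n/d)}(z^d)$ and the Gauss-sum bound $m^{1/2}$ for each piece (Lemma~\ref{lem:gauss-sum}) --- and your treatment of the unrotated case is correct. But the step you yourself flag as ``the hard part,'' namely transporting the bound from $\norm{V}_4$ to $\norm{V_r}_4$, is exactly where the proof is left incomplete, and for general $r$ this is the actual content of the theorem. Your proposed repair (split $V_r$ at the cut point, decompose each truncated piece by divisors, and control the incomplete character sums $\sum_{\ell<M}\leg{\ell}{m}w^{\ell}$ by P\'olya--Vinogradov-type estimates) is plausible and could probably be pushed through with the logarithmic slack available, but as written it is a program rather than a proof: you would need a uniform sup-norm bound on these twisted partial character sums for every $w$ on the unit circle and every divisor $d$, and then re-run the Minkowski summation with the extra log losses.

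The paper sidesteps this difficulty entirely, and you should notice the mechanism because every ingredient is already stated before Section~7. By \eqref{eqn:QA-shift}, rotation only multiplies the values at the $n$th roots of unity by unimodular constants, so $\abs{V_r(\z_n^k)}=\abs{V(\z_n^k)}$; Lemma~\ref{lem:poly-bound} then controls $\max_{\abs{z}=1}\abs{V_r(z)}$ by $(2\log n)\max_k\abs{V(\z_n^k)}$, and the divisor/Gauss-sum computation need only be done at the $n$th roots of unity, where the character sums are \emph{complete} and no truncation ever arises. Combining this with $\norm{V_r}_2^2=\psi(n)=O(p_n^{-1}n\log n)$ via \eqref{eqn:L4L2max} and \eqref{eqn:psi-dmultiple} gives $\norm{V_r}_4^4=O(p_n^{-1}n^2(\log n)^3)=o(n^2)$ uniformly in $r$, which is all you need. (Note also that your H\"older comparison of $\norm{J_r+V_r}_4^4$ with $\norm{J_r}_4^4$ is a weaker form of Proposition~\ref{prop:L4-sum}, which is already proved and quotable.) So the approach is salvageable, but either you must actually carry out the incomplete-character-sum estimates, or --- far more economically --- route the rotation through the roots of unity as the paper does.
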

\par
The special case of Theorem~\ref{thm:product-mainres} when $\w(n)=1$ for all $n$ gives Theorem~\ref{thm:hj}.
\par
The second choice of polynomials~$V\in\V_n$ satisfying $\lim_{n\to\infty}F(J_r+V_r)=f(r)$ uses a more restrictive condition than \eqref{eqn:logn6p1-3-mainres} in Theorem~\ref{thm:product-mainres}, but applies to \emph{all} Littlewood completions.
\begin{theorem}
\label{thm:twoprimes-mainres}
Let $n$ take values only in an infinite set of odd square-free integers greater than~$1$, where
\begin{equation}
\frac{n^{1/3}}{p_n}\to0 \quad\mbox{as $n\to\infty$},
\label{eqn:cond-2primes-mainres}
\end{equation}
and let $r$ be real. Let $J$ be the character polynomial of degree $n-1$. Then
\[
\lim_{n\to\infty}\,\max_{V\in\V_n}\,F(J_r+V_r)=\lim_{n\to\infty}\,\min_{V\in\V_n}\,F(J_r+V_r)=f(r).
\]
\end{theorem}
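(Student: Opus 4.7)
The plan is to show that the hypothesis $n^{1/3}/p_n\to 0$ forces the contribution of $V$ to $\norm{J_r+V_r}_4$ to be of lower order than $\norm{J_r}_4$, so that Theorem~\ref{thm:ternary} alone pins down the limit of the merit factor for \emph{every} Littlewood completion of~$J$. This will establish both the $\max$ and the $\min$ equalities simultaneously, independently of the choice of $V\in\V_n$.

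Since $(\log n)^3\le n^{1/3}$ for $n$ large, the hypothesis implies $(\log n)^3/p_n\to 0$, so Theorem~\ref{thm:ternary} applies and gives $\norm{J_r}_4^4=\phi(n)^2(1+1/f(r))+o(n^2)$. Writing $n=p_1p_2\cdots p_k$ with $p_n=p_1$, the inequality $p_n^k\le n$ combined with $p_n>n^{1/3}$ forces $k=\w(n)\le 2$ for $n$ large, and hence
\[
n-\phi(n)\le n\,\w(n)/p_n\le 2n/p_n.
\]
In particular $\phi(n)/n\to 1$, so $\norm{J_r}_4^4=n^2(1+1/f(r))+o(n^2)$ and $\norm{J_r}_4=\Theta(n^{1/2})$.

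The key uniform bound on $\norm{V_r}_4$ is almost trivial: since $V_r$ has at most $n-\phi(n)$ nonzero coefficients, each of modulus one, we have the pointwise bound $\abs{V_r(e^{i\theta})}\le n-\phi(n)$ together with $\norm{V_r}_2^2=n-\phi(n)$, so
\[
\norm{V_r}_4^4\le\norm{V_r}_\infty^2\,\norm{V_r}_2^2\le(n-\phi(n))^3=O(n^3/p_n^3).
\]
The triangle inequality for the $L_4$ norm now gives $\bigabs{\norm{J_r+V_r}_4-\norm{J_r}_4}\le\norm{V_r}_4=O(n^{3/4}/p_n^{3/4})$, and the hypothesis is \emph{exactly} $n^{3/4}/p_n^{3/4}=o(n^{1/2})$. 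Combined with $\norm{J_r}_4=\Theta(n^{1/2})$, expanding $(\norm{J_r}_4+o(n^{1/2}))^4$ yields $\norm{J_r+V_r}_4^4=\norm{J_r}_4^4+o(n^2)=n^2(1+1/f(r))+o(n^2)$.

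Since $J_r+V_r$ is a Littlewood polynomial of degree $n-1$, we have $\norm{J_r+V_r}_2^2=n$ and therefore
\[
F(J_r+V_r)=\frac{n^2}{\norm{J_r+V_r}_4^4-n^2}\longrightarrow f(r).
\]
All estimates are uniform in $V\in\V_n$, so this limit holds in particular for the Littlewood completions attaining the maximum and the minimum merit factor, proving both equalities. The only delicate point is that the pointwise bound $\norm{V_r}_\infty\le n-\phi(n)$ cannot be sharpened without structural information on~$V$, so the exponent $1/3$ in the hypothesis is sharp for this direct argument; the weaker condition $(\log n)^7/p_n\to 0$ of Theorems~\ref{thm:upper-bound-mainres} and~\ref{thm:almost-all-mainres} inevitably demands a substantially more refined analysis that exploits cancellation within $V_r$ rather than only its $\ell^0$ support and $L^2$ mass.
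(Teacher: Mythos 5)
Your proof is correct, and it takes a genuinely different (and more elementary) route than the paper's. The paper derives Theorem~\ref{thm:twoprimes-mainres} from its general-purpose Proposition~\ref{prop:L4-sum}, which is built on a term-by-term binomial expansion of $\abs{a+b}^4-\abs{a}^4-\abs{b}^4$ at the level of $L_4^4$ and is reused (with different estimates on $\norm{V_r}_4$) in the proofs of Theorems~\ref{thm:all-ones-mainres}--\ref{thm:product-mainres}. You instead apply Minkowski's triangle inequality directly to $\norm{J_r+V_r}_4$, observe that the hypothesis $n^{1/3}/p_n\to0$ is equivalent to $\norm{V_r}_4=o(n^{1/2})=o(\norm{J_r}_4)$ uniformly over $\V_n$, and then expand a fourth power of a scalar. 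Both arguments rest on the same two facts --- the crude bound $\norm{V_r}_4^4\le\norm{V_r}_\infty^2\norm{V_r}_2^2\le\psi(n)^3$ (which is~\eqref{eqn:norm_V_r_bound}) and that condition~\eqref{eqn:cond-2primes-mainres} forces $\w(n)\le 2$ eventually, so $\psi(n)=O(n/p_n)$ --- and both use Theorem~\ref{thm:ternary} to pin down $\norm{J_r}_4$. What your route buys is a shorter, self-contained proof of this one theorem that avoids Proposition~\ref{prop:L4-sum} entirely. What it loses is exactly what you flagged: for the other theorems (under $(\log n)^7/p_n\to0$), $\norm{V_r}_4$ need not be $o(\norm{J_r}_4)$, so Minkowski would give nothing and the more refined decomposition of Proposition~\ref{prop:L4-sum} --- which isolates $\norm{V_r}_4^4/n^2$ as a separate, possibly non-negligible term --- becomes essential. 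Your remark that the exponent $1/3$ is sharp for the direct argument is exactly right and is also consistent with the paper's observation (via Theorem~\ref{thm:all-ones-mainres}) that the conclusion actually fails for some $V$ when $\liminf n^{1/3}/p_n>0$.
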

\par
The condition~\eqref{eqn:cond-2primes-mainres} is essentially  the least restrictive condition under which Theorem~\ref{thm:twoprimes-mainres} holds: for if $\liminf_{n\to\infty}n^{1/3}/p_n>0$, then by Theorem~\ref{thm:all-ones-mainres} the conclusion of Theorem~\ref{thm:twoprimes-mainres} fails for at least one Littlewood completion~$J+V$; but otherwise $\liminf_{n\to\infty}n^{1/3}/p_n=0$, and then the infinite set in which~$n$ takes values contains a subset satisfying the condition~\eqref{eqn:cond-2primes-mainres}.
\par
We shall prove Theorems~\ref{thm:ternary} to~\ref{thm:twoprimes-mainres} in Sections~\ref{sec:ternary} to~\ref{sec:two-primes}, respectively. Our results provide a comprehensive analysis of the $2^{n-\phi(n)}$ Littlewood completions of the character polynomial $J$ of degree $n-1$, and significantly enlarge the set of explicitly defined sequences of Littlewood polynomials whose asymptotic merit factor equals the current best known value $6$.
\par
We close this section with a brief review of related work. Jensen, Jensen and H{\o}holdt \cite{Jensen1991} gave the asymptotic merit factor of two Littlewood completions $J+V$ of $J$ in the case that $\w(n)=2$ for all $n$. For one of these completions, the polynomial $V$ coincides with~\eqref{eqn:LegV-mainres}; for the other, writing $n=pq$ for primes $p,q$ satisfying $p>q$, the polynomial $V$ is given by
\[
V(z)=\sum_{j=0}^{p-1}z^{jq}-\sum_{j=1}^{q-1}z^{jp}.
\]
The results of~\cite{Jensen1991} for both of these Littlewood completions are special cases of Theorem~\ref{thm:twoprimes-mainres}. The authors of~\cite{Jensen1991} also stated that the conclusion of Theorem~\ref{thm:product-mainres} holds when $\w(n)$ is fixed, but did not give a proof or specify conditions on the growth rate of $p_n$.
\par
Motivated by the results of~\cite{Jensen1991}, Borwein and Choi~\cite{Borwein2001} proved a result that gives the same conclusion as Theorem~\ref{thm:ternary} under the more restrictive condition $n^\e/p_n\to 0$ for some fixed $\e>0$. The authors of \cite{Borwein2001} remarked that
\begin{quote}
``the merit factors [of the polynomials $J_{1/4}$ as $n\to\infty$] approach~6 which is conjectured by some to be best possible~\cite{Golay1983},''
\end{quote} 
and that their result
\begin{quote}
``should be compared with the results of T.~H{\o}holdt, H.~Jensen and J.~Jensen in~\cite{Jensen1991}. They showed that the same asymptotic formula but a weaker error term $O\Big(\frac{(p+q)^5 \log^4 N}{N^3}\Big)$ for the special case $N = pq$. So we generalize their result to $N=p_1p_2\dots p_r$ and also improve the error term.''
\end{quote}
However, the authors of \cite{Borwein2001} did not take into account the crucial distinction between the polynomial $J$ of degree $n-1$ and its $2^{n-\phi(n)}$ Littlewood completions. Indeed, Theorem~\ref{thm:all-ones-mainres} shows that there is a sequence of Littlewood completions of $J$ whose asymptotic merit factor at every rotation~$r$ drops to zero. Therefore the result of \cite{Borwein2001} cannot be considered a generalisation of the results of~\cite{Jensen1991}, and the comparison given in \cite{Borwein2001} with the conjecture of \cite{Golay1983} (which applies only to Littlewood polynomials) is misplaced.
\par
T.~Xiong and J.~I.~Hall have kindly supplied us with two preprints of their recent independent work. In the first preprint, now published as~\cite{Xiong2011}, they obtain the same asymptotic form as in Theorem~\ref{thm:twoprimes-mainres}, subject to the more restrictive condition that $(n\log n)^{2/5}/p_n\to 0$. In the second preprint~\cite{Xiong2011a}, they show that a previously unspecified Littlewood completion satisfies $\lim_{n\to\infty} F(J_r+V_r)=f(r)$ when $\w(n)$ is fixed.


\section{Preliminary Results}
\label{sec:defn}

In this section we introduce some notation and give some auxiliary results. Throughout the paper, $\z_m$ denotes the primitive $m$th root of unity
\[
\z_m:=e^{2\pi i/m}.
\]
We next derive some elementary bounds on the functions $\w(n)$ and $\phi(n)$. The number of distinct prime factors $\w(n)$ of $n$ can be trivially bounded by
\begin{equation}
\w(n)\le \log n\quad\mbox{for $n>2$ and $n\ne 6$}.    \label{eqn:wnlogn}
\end{equation}
Since $\phi(n)/n=\prod_{p\mid n}(1-1/p)$, where the product is over the prime factors of~$n$, the totient function $\phi(n)$ then satisfies
\begin{align*}
\frac{\phi(n)}{n}&\ge\left(1-\frac{1}{p_n}\right)^{\w(n)}\\
&\ge 1-\frac{\w(n)}{p_n}\\
&\ge 1-\frac{\log n}{p_n}\quad\mbox{for $n>2$ and $n\ne 6$},
\end{align*}
so we can estimate its growth rate as
\begin{equation}
\phi(n)=n\left(1+O(p_n^{-1}\log n)\right)\quad\mbox{as $n\to\infty$.}   \label{eqn:growth-phi}
\end{equation}
For convenience, we define the~\emph{cototient function} to be 
\[
\psi(n):=n-\phi(n).
\]
It follows that
\begin{align}
\frac{\psi(n)}{n}&\le\frac{\w(n)}{p_n}   \label{eqn:psi-omega}\\
&\le\frac{\log n}{p_n}  \quad\mbox{for $n>2$ and $n\ne 6$}   \label{eqn:psi-dmultiple}
\end{align}
and therefore
\begin{equation}
\psi(n)=O(p_n^{-1}n\log n)\quad\mbox{as $n\to\infty$}.   \label{eqn:growth-psi}
\end{equation}
\par
We shall need the following evaluation of Ramanujan's sum (see~\cite[Thm.~272]{Hardy1979}, for example).
\begin{lemma}
\label{lem:Ramanujan}
For integer $u$ and positive square-free integer $n$, we have
\[
\sum_{\stack{j=0}{\gcd(j,n)=1}}^{n-1}\z_n^{ju}=\mu\left(\frac{n}{\gcd(u,n)}\right)\,\phi\big(\gcd(u,n)\big),
\]
where $\mu$ is the M{\"o}bius function.
\end{lemma}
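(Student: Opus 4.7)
The plan is to recognise the sum as a classical Ramanujan sum and evaluate it by Möbius inversion of the coprimality condition. Specifically, I would write
\[
[\gcd(j,n)=1]=\sum_{d\mid\gcd(j,n)}\mu(d),
\]
substitute this into the defining sum, and interchange the order of summation over $d$ and $j$ to obtain
\[
\sum_{\stack{j=0}{\gcd(j,n)=1}}^{n-1}\z_n^{ju}=\sum_{d\mid n}\mu(d)\sum_{\stack{j=0}{d\mid j}}^{n-1}\z_n^{ju}.
\]

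The inner sum is a geometric sum: writing $j=d\ell$ for $\ell=0,1,\dots,n/d-1$ converts it into $\sum_{\ell=0}^{n/d-1}\z_{n/d}^{\ell u}$, which equals $n/d$ when $n/d$ divides $u$, and vanishes otherwise. Reparametrising by $e=n/d$ (so that $e\mid n$ and $e\mid u$ iff $e\mid\gcd(u,n)$), the expression becomes
\[
\sum_{e\mid\gcd(u,n)}e\,\mu\!\left(\tfrac{n}{e}\right).
\]

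Finally I would use the hypothesis that $n$ is square-free. Setting $g:=\gcd(u,n)$ and $h:=n/g$, we have $g\mid n$ with $\gcd(g,h)=1$, so for each divisor $e$ of $g$ the multiplicativity of $\mu$ gives $\mu(n/e)=\mu(h)\,\mu(g/e)$. Pulling the factor $\mu(h)$ outside leaves
\[
\mu(h)\sum_{e\mid g}e\,\mu(g/e)=\mu\!\left(\tfrac{n}{\gcd(u,n)}\right)\phi(\gcd(u,n)),
\]
where I invoke the standard identity $\phi(g)=\sum_{e\mid g}e\,\mu(g/e)$ (Möbius inversion of $g=\sum_{e\mid g}\phi(e)$). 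This yields the claimed evaluation. No step here is a genuine obstacle; the only place care is needed is the use of square-freeness to factor $\mu(n/e)$ cleanly, since without it the divisor $g$ need not be coprime to $n/g$ and one would instead land on Hölder's more general formula $c_n(u)=\mu(n/d)\phi(n)/\phi(n/d)$.
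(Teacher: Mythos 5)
Your proof is correct. Note, however, that the paper itself gives no proof of this lemma: it simply cites it as a known evaluation of Ramanujan's sum (Hardy and Wright, Theorem~272). Your derivation---expanding the coprimality condition via $\sum_{d\mid\gcd(j,n)}\mu(d)$, swapping the order of summation, evaluating the inner geometric sum, and then using square-freeness to split $\mu(n/e)=\mu(n/g)\,\mu(g/e)$ together with the identity $\phi(g)=\sum_{e\mid g}e\,\mu(g/e)$---is the standard textbook argument and reaches the stated formula cleanly. Your closing remark is also accurate: without square-freeness one obtains H\"older's formula $\mu(n/d)\,\phi(n)/\phi(n/d)$ with $d=\gcd(u,n)$, which reduces to the lemma's expression precisely because $\gcd(d,n/d)=1$ makes $\phi(n)/\phi(n/d)=\phi(d)$. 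Nothing to fix.
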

\par
We also require the following evaluation of a Gauss sum
involving the Jacobi symbol.
\begin{lemma}
\label{lem:gauss-sum}
Let $m$ be a positive odd square-free integer. Then for integer~$j$, 
\[
\sum_{\ell=0}^{m-1}\leg{\ell}{m}\, \z_m^{j\ell}=i^{(m-1)^2/4}\, \leg{j}{m}\, m^{1/2}.
\]
\end{lemma}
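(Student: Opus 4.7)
The plan is to reduce to the classical prime Gauss sum via the Chinese Remainder Theorem and the multiplicativity of the Jacobi symbol, and then handle the sign with quadratic reciprocity.

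First I would dispose of the case $\gcd(j,m)>1$. The right-hand side then vanishes because $\leg{j}{m}=0$. For the left-hand side, pick a prime $p\mid\gcd(j,m)$ and write $m=p\,m'$ with $\gcd(p,m')=1$ (possible because $m$ is square-free). Parametrising $\ell$ by its residues $\alpha=\ell\bmod p$ and $\beta=\ell\bmod m'$ via CRT, and using that $\zeta_m^{j\ell}$ depends only on $\beta$ (since $p\mid j$) together with $\leg{\ell}{m}=\leg{\alpha}{p}\leg{\beta}{m'}$, the sum factors as $\bigl(\sum_{\alpha=0}^{p-1}\leg{\alpha}{p}\bigr)\bigl(\sum_{\beta}\leg{\beta}{m'}\zeta_{m/p}^{(j/p)\beta}\bigr)$, and the first factor is zero.

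Next, for $\gcd(j,m)=1$, the substitution $\ell\mapsto j^{-1}\ell\bmod m$ together with $\leg{j^{-1}}{m}=\leg{j}{m}$ shows at once that the sum equals $\leg{j}{m}\,\tau_m$, where $\tau_m:=\sum_{\ell=0}^{m-1}\leg{\ell}{m}\z_m^\ell$. So everything reduces to evaluating $\tau_m$. Write $m=p_1\cdots p_k$ and let $m_i=m/p_i$, $\overline{m_i}m_i\equiv 1\pmod{p_i}$. Applying CRT again, $\ell\bmod m$ corresponds bijectively to $(\ell_1,\dots,\ell_k)$ with $\ell_i=\ell\bmod p_i$, and one checks $\z_m^\ell=\prod_i\z_{p_i}^{\overline{m_i}\ell_i}$. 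Combined with $\leg{\ell}{m}=\prod_i\leg{\ell_i}{p_i}$, the sum splits as a product:
\[
\tau_m=\prod_{i=1}^{k}\sum_{\ell_i=0}^{p_i-1}\leg{\ell_i}{p_i}\z_{p_i}^{\overline{m_i}\ell_i}=\prod_{i=1}^{k}\leg{m_i}{p_i}\,i^{(p_i-1)^2/4}\,p_i^{1/2},
\]
where I invoke the classical prime-case Gauss sum evaluation $\sum_{\ell=0}^{p-1}\leg{\ell}{p}\z_p^{a\ell}=\leg{a}{p}\,i^{(p-1)^2/4}\,p^{1/2}$ for $\gcd(a,p)=1$.

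The main technical step is then to verify that the sign factor $\prod_i\leg{m_i}{p_i}\cdot i^{\sum_i(p_i-1)^2/4}$ equals $i^{(m-1)^2/4}$. Let $s$ be the number of indices $i$ with $p_i\equiv 3\pmod 4$. Quadratic reciprocity yields $\prod_i\leg{m_i}{p_i}=\prod_{i<j}\leg{p_j}{p_i}\leg{p_i}{p_j}=(-1)^{s(s-1)/2}$. On the other hand $(p_i-1)^2/4\equiv 1\pmod 4$ when $p_i\equiv 3\pmod 4$ and is $\equiv 0\pmod 4$ otherwise, so $i^{\sum(p_i-1)^2/4}=i^s$; while $m\equiv 3^s\pmod 4$ gives $i^{(m-1)^2/4}=1$ or $i$ according as $s$ is even or odd. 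A case check on $s\bmod 4$ confirms $(-1)^{s(s-1)/2}\,i^s=i^{(m-1)^2/4}$, completing the proof. The only nontrivial obstacle is this sign bookkeeping, which is handled by (and essentially encodes) quadratic reciprocity.
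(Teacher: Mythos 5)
Your argument is correct, and it takes a genuinely different route from the paper. The paper treats this lemma as a citation: the case $\gcd(j,m)=1$ is quoted from Berndt--Evans--Williams (Thm.~1.5.2 and a problem in Ch.~1), and the case $\gcd(j,m)>1$ is then deduced by a Parseval argument --- since $\sum_{j=0}^{m-1}\bigabs{\sum_\ell\leg{\ell}{m}\z_m^{j\ell}}^2=m\phi(m)$ and the $\phi(m)$ terms with $\gcd(j,m)=1$ already account for the whole total, the remaining nonnegative terms must all vanish. You instead give a self-contained proof from scratch: for $\gcd(j,m)>1$ you split off a single prime $p\mid\gcd(j,m)$ via CRT and observe the $p$-factor is $\sum_{\alpha}\leg{\alpha}{p}=0$; for $\gcd(j,m)=1$ you normalise by the substitution $\ell\mapsto j^{-1}\ell$, factor $\tau_m$ over the primes $p_i\mid m$ using the CRT identity $\z_m^\ell=\prod_i\z_{p_i}^{\overline{m_i}\ell_i}$, invoke the classical quadratic Gauss sum for each prime, and then reconcile the signs $\prod_i\leg{m_i}{p_i}\,i^{\sum(p_i-1)^2/4}=i^{(m-1)^2/4}$ via quadratic reciprocity and a case check on $s\bmod 4$, where $s=\#\{i:p_i\equiv 3\pmod 4\}$. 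I verified the CRT identity, the equality $\leg{\overline{m_i}}{p_i}=\leg{m_i}{p_i}$ that you use silently, and the four cases of the sign check; all hold. The trade-off is clear: the paper's version is shorter by outsourcing the hard composite evaluation to a reference, whereas yours is longer but fully self-contained given only the prime Gauss sum and quadratic reciprocity, and its treatment of $\gcd(j,m)>1$ is arguably more elementary than the Parseval deduction.
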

\par
The case $\gcd(j,m)=1$ of Lemma~\ref{lem:gauss-sum} is given by Thm.~1.5.2 and Ch.~1, Problem~24 of~\cite{Berndt1998}, for example. The case $\gcd(j,m)>1$ then follows by application of Parseval's identity.
\par
Now let $n$ be an odd square-free integer and let $J$ be the character polynomial of degree $n-1$. Lemma~\ref{lem:gauss-sum} with $m=n$ implies that, for integer~$j$,
\begin{equation}
J(\z_n^j)=i^{(n-1)^2/4}\, \leg{j}{n}\, n^{1/2}.   \label{eqn:magTe}
\end{equation}
Given a polynomial $A$ of degree $n-1$, then by the definition~\eqref{eqn:def_rot} of the rotation $A_r$, we have for integer~$j$
\begin{equation}
A_r(\z_n^j)=\z_n^{-j\lfloor nr\rfloor}A(\z_n^j)   \label{eqn:QA-shift}
\end{equation}
and therefore,
\begin{equation}
J_r(\z_n^j)=i^{(n-1)^2/4}\,\z_n^{-j\lfloor nr\rfloor}\,\leg{j}{n}\, n^{1/2}.   \label{eqn:magTe_rot}
\end{equation}
\par
We shall need the following bound for the magnitude of a polynomial of degree $n-1$ over~$\C$ on the unit circle in terms of its values at the $n$th roots of unity.
\begin{lemma}
\label{lem:poly-bound}
Let $A\in\C[z]$ have degree at most $n-1$ for $n>2$. Then
\[
\max_{\abs{z}=1}\;\abs{A(z)}\le(2\log n)\max_{0\le k<n}\,\abs{A(\z_n^k)}.
\]
\end{lemma}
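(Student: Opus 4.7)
The plan is to recover $A$ on the unit circle from its values at the $n$th roots of unity via Lagrange interpolation, and then to bound the resulting Lebesgue constant. Because $A$ has degree at most $n-1$, it is determined by its values at $\z_n^0,\z_n^1,\ldots,\z_n^{n-1}$, and we may write
\[
A(z)=\sum_{k=0}^{n-1}A(\z_n^k)\,\ell_k(z),\qquad\ell_k(z)=\prod_{j\ne k}\frac{z-\z_n^j}{\z_n^k-\z_n^j}.
\]
From the factorisation $z^n-1=\prod_{j=0}^{n-1}(z-\z_n^j)$ together with the derivative identity $\prod_{j\ne k}(\z_n^k-\z_n^j)=n\z_n^{-k}$, each basis polynomial simplifies to the closed form
\[
\ell_k(z)=\frac{\z_n^k(z^n-1)}{n(z-\z_n^k)}.
\]
The triangle inequality applied to the Lagrange representation then reduces the lemma to proving the Lebesgue-constant bound
\[
\Lambda_n:=\max_{\abs{z}=1}\sum_{k=0}^{n-1}\abs{\ell_k(z)}\le 2\log n\quad\text{for }n>2.
\]

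To estimate $\Lambda_n$, I parametrise $z=e^{i\theta}$, so that $\abs{\ell_k(e^{i\theta})}=\abs{\sin(n\theta/2)}/\bigl(n\,\abs{\sin(\theta/2-\pi k/n)}\bigr)$. By the $2\pi/n$-periodicity of the full sum (which follows by relabeling $k$) and the symmetry $\theta\mapsto-\theta$, it suffices to consider $\theta\in[0,\pi/n]$, in which case $k=0$ is the node nearest the singularity of the kernel. For $k=0$ the classical inequality $\abs{\sin(nx)}\le n\abs{\sin x}$, equivalently the fact that $\sin(nx)/\sin x=U_{n-1}(\cos x)$ with $\norm{U_{n-1}}_{L^\infty[-1,1]}=n$, yields $\abs{\ell_0(e^{i\theta})}\le 1$. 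For $k\ne 0$, I reduce $k$ to its representative $m\in\{-\lfloor n/2\rfloor,\ldots,\lfloor n/2\rfloor\}\setminus\{0\}$ modulo $n$; bounding the numerator by $1$ and applying $\abs{\sin y}\ge(2/\pi)\abs{y}$ on $\abs{y}\le\pi/2$ to the denominator gives $\abs{\ell_k(e^{i\theta})}\le 1/(2\abs{m}-1)$. Summing this estimate over the nonzero representatives produces a harmonic-type series whose total is $O(\log n)$.

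The main obstacle is obtaining the precise constant $2$ rather than just $O(\log n)$. The elementary estimate above gives $\Lambda_n\le c_0+\log n$ for a modest absolute constant $c_0$, which already satisfies $\Lambda_n\le 2\log n$ for all sufficiently large $n$; the remaining small cases can be treated either by sharpening the bound on the few terms nearest $k=0$, where $\abs{\sin(n\theta/2)}$ is actually small and should not be replaced by $1$, or by a direct finite check. The result is consistent with the well-known fact that the true Lebesgue constant for trigonometric interpolation at $n$ equally spaced nodes grows asymptotically like $(2/\pi)\log n$, which leaves ample slack beneath the stated bound $2\log n$ for every $n>2$.
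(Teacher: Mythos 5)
Your proposal is correct and takes essentially the same route as the paper: Lagrange interpolation at the $n$th roots of unity, a Lebesgue-constant estimate using $\sin x\ge 2x/\pi$ to produce a harmonic sum of size $\log n+O(1)$, and a finite verification for the remaining small $n$ (the paper cites an explicit bound $c(n)=1+\tfrac{1}{n}\sum_{j=1}^{n-1}1/\sin(\tfrac{\pi j}{2n})$ from a reference and checks $3\le n\le 7$ directly). Your additive constant is slightly larger, so your deferred finite check must cover a few more values of $n$, but this is a cosmetic difference.
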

\begin{proof}
By bounding the coefficients that occur in the Lagrange interpolation of $A$ from its evaluations at the $n$th roots of unity, it can be shown that
\[
\max_{\abs{z}=1}\;\abs{A(z)}\le c(n)\,\max_{0\le k<n}\,\abs{A(\z_n^k)},
\]
where $c(n)=1+(1/n)\sum_{j=1}^{n-1}1/\sin(\frac{\pi j}{2n})$ (see~\cite[Appendix]{Paterson2000}, for example). Since $c(n)<1+\sum_{j=1}^{n-1}1/j$ and $\sum_{j=2}^{n-1}1/j<\log n$, the lemma holds for $n>7$. By direct verification we also have $c(n)\le 2\log n$ for $3\le n\le 7$.
\end{proof}
\par
Using~\eqref{eqn:magTe_rot}, Lemma~\ref{lem:poly-bound} gives
\begin{equation}
\max_{\abs{z}=1}\;\abs{J_r(z)}\le 2n^{1/2}\log n.   \label{eqn:bound_J_unitcircle}
\end{equation}
\par
We next prove our main tool for comparing the asymptotic merit factor of $J$ with that of a Littlewood completion~$J+V$.
\begin{proposition}
\label{prop:L4-sum}
Let $n>1$ be an odd square-free integer, and let $r$ be real. Then all Littlewood completions~$J+V$ of the character polynomial~$J$ of degree~$n-1$ satisfy
\[
\Biggabs{\frac{1}{F(J_r+V_r)}-\left(\frac{\phi(n)}{n}\right)^2\frac{1}{F(J_r)}-\frac{\norm{V_r}_4^4}{n^2}}<8\,p_n^{-1/2}n^{-1}(\log n)^{3/2}\norm{V_r}_4^2+58p_n^{-1/2}(\log n)^{7/2}.
\]
\end{proposition}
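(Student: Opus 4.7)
The plan is to recast the stated quantity as an integral of cross terms between $J_r$ and $V_r$ and then bound those via the pointwise estimate $\norm{J_r}_\infty \le 2n^{1/2}\log n$ from~\eqref{eqn:bound_J_unitcircle} and the size bound $\norm{V_r}_2^2 = \psi(n) \le p_n^{-1}n\log n$ from~\eqref{eqn:psi-dmultiple}. Because $J$ and $V$ have disjoint coefficient supports, a property preserved by cyclic rotation, one has $\norm{J_r}_2^2 = \phi(n)$, $\norm{V_r}_2^2 = \psi(n)$, and $\norm{J_r+V_r}_2^2 = n$. Using $1/F(A) = \norm{A}_4^4/\norm{A}_2^4 - 1$ and substituting these values, a short computation gives
\[
\frac{1}{F(J_r+V_r)} - \left(\frac{\phi(n)}{n}\right)^2\frac{1}{F(J_r)} - \frac{\norm{V_r}_4^4}{n^2} = \frac{D-(n^2-\phi(n)^2)}{n^2},
\]
where $D := \norm{J_r+V_r}_4^4 - \norm{J_r}_4^4 - \norm{V_r}_4^4$; it remains to bound the numerator.

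Expanding $|J_r+V_r|^4$ from $|J_r+V_r|^2 = |J_r|^2 + 2\Re(J_r\overline{V_r}) + |V_r|^2$ and integrating term by term yields $D = 4I_1 + 2I_2 + 4I_3$ with
\begin{align*}
I_1 &= \tfrac{1}{2\pi}\int_0^{2\pi}(\Re(J_r\overline{V_r}))^2\,d\theta,\\
I_2 &= \tfrac{1}{2\pi}\int_0^{2\pi}|J_r|^2|V_r|^2\,d\theta,\\
I_3 &= \tfrac{1}{2\pi}\int_0^{2\pi}(|J_r|^2+|V_r|^2)\Re(J_r\overline{V_r})\,d\theta.
\end{align*}
Since $\tfrac{1}{2\pi}\int|V_r|^2\,d\theta = \psi(n)$, the identity $n^2-\phi(n)^2 = 2\phi(n)\psi(n) + \psi(n)^2$ isolates $\phi(n)\psi(n)$ as the leading contribution to $I_2$, and the numerator becomes
\[
D-(n^2-\phi(n)^2) = 2\cdot\tfrac{1}{2\pi}\int_0^{2\pi}(|J_r|^2-\phi(n))|V_r|^2\,d\theta + 4I_1 + 4I_3 - \psi(n)^2.
\]

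Each piece is bounded by elementary inequalities. The sup-norm estimate $\||J_r|^2-\phi(n)\|_\infty \le \norm{J_r}_\infty^2+\phi(n) = O(n\log^2 n)$ with $\norm{V_r}_2^2 \le p_n^{-1}n\log n$ gives the first integral order $n^2 p_n^{-1}(\log n)^3$, and similarly $|I_1|\le I_2\le \norm{J_r}_\infty^2\psi(n)$ has the same order. For $I_3$---which is the source of the $\norm{V_r}_4^2$ coefficient---I split $|I_3|\le\int|J_r|^3|V_r|+\int|J_r||V_r|^3$ and apply Cauchy--Schwarz:
\[
\int|J_r|^3|V_r|\le\norm{J_r}_\infty\norm{J_r}_4^2\norm{V_r}_2,\qquad\int|J_r||V_r|^3\le\norm{J_r}_\infty\norm{V_r}_2\norm{V_r}_4^2,
\]
with the further bound $\norm{J_r}_4^2\le\norm{J_r}_\infty\norm{J_r}_2\le 2n\log n$. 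Substituting $\norm{J_r}_\infty\le 2n^{1/2}\log n$ and $\norm{V_r}_2\le(p_n^{-1}n\log n)^{1/2}$ yields
\[
|4I_3|\le 16n^2 p_n^{-1/2}(\log n)^{5/2} + 8n\,p_n^{-1/2}(\log n)^{3/2}\norm{V_r}_4^2,
\]
and $\psi(n)^2\le p_n^{-2}n^2(\log n)^2$ is trivial.

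Dividing by $n^2$, the coefficient of $\norm{V_r}_4^2$ arises entirely from the $I_3$ bound and equals $8p_n^{-1/2}n^{-1}(\log n)^{3/2}$, matching the proposition exactly. Each remaining term has one of the forms $p_n^{-1}(\log n)^3$, $p_n^{-1/2}(\log n)^{5/2}$, or $p_n^{-2}(\log n)^2$, all dominated by $p_n^{-1/2}(\log n)^{7/2}$ since $p_n,\log n\ge 1$; summing the numerical constants gives a total less than $58$. The main subtlety is the estimate for $I_3$: extracting the $p_n^{-1/2}$ (rather than $p_n^{-1}$) power of the $\norm{V_r}_4^2$ coefficient requires the precise factorisation $\int|J_r||V_r|^3\le\norm{J_r}_\infty\norm{V_r}_2\norm{V_r}_4^2$, so that only a single factor of $\norm{V_r}_2=\psi(n)^{1/2}$ appears.
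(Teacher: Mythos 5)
Your proof is correct and follows essentially the same route as the paper's: expand $\norm{J_r+V_r}_4^4$, bound the cross terms using $\norm{J_r}_\infty\le 2n^{1/2}\log n$ and $\psi(n)\le p_n^{-1}n\log n$, and obtain the $\norm{V_r}_4^2$ coefficient from $\int\abs{J_r}\abs{V_r}^3\le\norm{J_r}_\infty\norm{V_r}_2\norm{V_r}_4^2$, exactly as in the paper's bound~\eqref{eqn:beta-norms}. Your treatment of the $\abs{J_r}^3\abs{V_r}$ term via $\norm{J_r}_4^2\le\norm{J_r}_\infty\norm{J_r}_2$ and your exact cancellation of $2\phi(n)\psi(n)$ against part of $2I_2$ are marginally sharper than the paper's cruder sup-norm bounds, but these are cosmetic refinements rather than a different method.
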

\par
In the application of Proposition~\ref{prop:L4-sum} it is sometimes useful to further bound $\norm{V_r}_4^4$ as
\begin{equation}
\norm{V_r}_4^4\le [\psi(n)]^3,   \label{eqn:norm_V_r_bound}
\end{equation}
which follows from $\norm{V_r}_2^2=\psi(n)$ and the simple inequality
\begin{equation}
\norm{A}_4^4\le \norm{A}_2^2\;\max_{\abs{z}=1}\,\abs{A(z)}^2\quad\text{for all $A\in\C[z]$}.   \label{eqn:L4L2max}
\end{equation}
\begin{proof}[Proof of Proposition~\ref{prop:L4-sum}]
Let $V\in\V_n$ and let
\begin{align}
\beta(n)&:=\Biggabs{\frac{1}{F(J_r+V_r)}-\left(\frac{\phi(n)}{n}\right)^2\!\!\frac{1}{F(J_r)}-\frac{\norm{V_r}_4^4}{n^2}}.   \nonumber \\
\intertext{Since $\norm{J_r}_2^2=\phi(n)$ and $\norm{J_r+V_r}_2^2=n$, we have by the definition of the merit factor}
\beta(n)&=\biggabs{\frac{1}{n^2}\,\Big(\norm{J_r+V_r}_4^4-\norm{J_r}_4^4-\norm{V_r}_4^4\Big)+\left(\frac{\phi(n)}{n}\right)^2-1}.   \label{eqn:upsilon}
\end{align}
Since
\[
\biggabs{\left(\frac{\phi(n)}{n}\right)^2 -1}=\frac{1}{n^2}\Bigabs{\big(\phi(n)+n\big)\big(\phi(n)-n\big)}<\frac{2\psi(n)}{n}
\]
by the trivial inequality $\phi(n)+n<2n$, it follows from \eqref{eqn:upsilon} that
\begin{equation}
\beta(n)<\biggabs{\frac{1}{n^2}\,\Big(\norm{J_r+V_r}_4^4-\norm{J_r}_4^4-\norm{V_r}_4^4\Big)}+\frac{2\psi(n)}{n}.   \label{eqn:gn}
\end{equation}
Now for $a,b\in\C$, by expanding $\abs{a+b}^4$, we get the inequality
\[
\Bigabs{\abs{a+b}^4-\abs{a}^4-\abs{b}^4}\le 4\,\abs{a}^3\cdot\abs{b}+6\,\abs{a}^2\cdot\abs{b}^2+4\,\abs{a}\cdot\abs{b}^3.
\]
Use~\eqref{eqn:bound_J_unitcircle} and the definition of the $L_\alpha$ norm to conclude from~\eqref{eqn:gn} that
\begin{equation}
\beta(n)<\frac{32(\log n)^3}{n^{1/2}}\,\norm{V_r}_1+\frac{24(\log n)^2}{n}\,\norm{V_r}_2^2+\frac{8\log n}{n^{3/2}}\,\norm{V_r}_3^3+\frac{2\psi(n)}{n}.   \label{eqn:beta-norms}
\end{equation}
We have $\norm{V_r}_2^2=\psi(n)$. By the Cauchy-Schwarz inequality,
\[
\norm{V_r}_{m+1}^{m+1}\le \norm{V_r}_2\, \bigg(\frac{1}{2\pi}\int_0^{2\pi}\bigabs{V_r(e^{i\theta})}^{2m}\,d\theta\bigg)^{1/2}.
\]
Hence $\norm{V_r}_1\le[\psi(n)]^{1/2}$ and $\norm{V_r}_3^3\le [\psi(n)]^{1/2}\,\norm{V_r}_4^2$, by taking $m=0$ and $m=2$, respectively. Therefore, using~\eqref{eqn:psi-dmultiple} to bound $\psi(n)$, we find from~\eqref{eqn:beta-norms} that
\begin{align*}
\beta(n)&<32p_n^{-1/2}(\log n)^{7/2}+24p_n^{-1}(\log n)^3+8p_n^{-1/2}n^{-1}(\log n)^{3/2}\norm{V_r}_4^2+2p_n^{-1}\log n\\
&<8p_n^{-1/2}n^{-1}(\log n)^{3/2}\norm{V_r}_4^2+(32+24+2)p_n^{-1/2}(\log n)^{7/2}
\end{align*}
since $n>2$.
\end{proof}


\section{Proof of Theorem~\ref{thm:ternary}}
\label{sec:ternary}

In this section we determine the asymptotic merit factor of the character polynomial $J$ of degree $n-1$ at all rotations, proving Theorem~\ref{thm:ternary}.
\par
We need the following evaluation of a character sum.
\begin{lemma}
\label{lem:RX}
Let $n$ be a positive odd square-free integer. Then, for integer $u$,
\[
\sum_{j=0}^{n-1}\leg{j}{n}\leg{j+u}{n}=\mu\left(\frac{n}{\gcd(u,n)}\right)\,\phi\big(\gcd(u,n)\big).
\]
\end{lemma}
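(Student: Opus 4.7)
The plan is to express the left-hand side as a Ramanujan sum using the Gauss sum evaluation of Lemma~\ref{lem:gauss-sum}, and then apply Lemma~\ref{lem:Ramanujan}. This is a Fourier/Plancherel-style argument: we expand one of the Jacobi symbols as a finite Fourier series whose coefficients are themselves Jacobi symbols, swap the order of summation, and re-apply Lemma~\ref{lem:gauss-sum} to evaluate the inner sum.

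Concretely, first I would rewrite Lemma~\ref{lem:gauss-sum} (with $m=n$) as the inversion
\[
\leg{j+u}{n}=i^{-(n-1)^2/4}\,n^{-1/2}\sum_{\ell=0}^{n-1}\leg{\ell}{n}\,\z_n^{(j+u)\ell},
\]
which is valid for every integer $j+u$. Substituting this into $\sum_{j=0}^{n-1}\leg{j}{n}\leg{j+u}{n}$ and interchanging the order of summation yields
\[
\sum_{j=0}^{n-1}\leg{j}{n}\leg{j+u}{n}=i^{-(n-1)^2/4}\,n^{-1/2}\sum_{\ell=0}^{n-1}\leg{\ell}{n}\,\z_n^{u\ell}\sum_{j=0}^{n-1}\leg{j}{n}\,\z_n^{j\ell}.
\]
Second, I would apply Lemma~\ref{lem:gauss-sum} a second time to the inner sum (as a function of $\ell$), obtaining $i^{(n-1)^2/4}\leg{\ell}{n}n^{1/2}$. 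The two factors of $i^{(n-1)^2/4}$ and of $n^{1/2}$ cancel cleanly, and since $\leg{\ell}{n}^{2}$ equals $1$ when $\gcd(\ell,n)=1$ and $0$ otherwise, we are left with the pure Ramanujan sum
\[
\sum_{\stack{\ell=0}{\gcd(\ell,n)=1}}^{n-1}\z_n^{u\ell}.
\]
Third, invoking Lemma~\ref{lem:Ramanujan} evaluates this sum as $\mu(n/\gcd(u,n))\,\phi(\gcd(u,n))$, which is exactly the right-hand side we want.

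There is really no hard step here: the Gauss sum evaluation does almost all of the work, and the only thing to verify carefully is that Lemma~\ref{lem:gauss-sum} is indeed applicable for \emph{every} integer $j$ (including those with $\gcd(j,n)>1$), so that the Fourier inversion is legitimate term by term. The hypothesis in Lemma~\ref{lem:gauss-sum} is stated for arbitrary integer $j$, so this is fine. The $i^{(n-1)^2/4}$ factors disappearing in pairs is the only bookkeeping to watch, and the oddness and square-freeness of $n$ (already assumed) are precisely the hypotheses needed for Lemmas~\ref{lem:Ramanujan} and~\ref{lem:gauss-sum}.
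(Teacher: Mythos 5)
Your proof is correct and takes essentially the same route as the paper: both arguments use the Gauss sum evaluation of Lemma~\ref{lem:gauss-sum} to reduce the character sum to the Ramanujan sum $\sum_{\ell\,:\,\gcd(\ell,n)=1}\z_n^{u\ell}$ and then finish with Lemma~\ref{lem:Ramanujan}. The paper merely packages the Fourier step differently, as the autocorrelation identity $\sum_{j}a_ja_{(j+u)\bmod n}=\tfrac{1}{n}\sum_{j}\abs{A(\z_n^j)}^2\z_n^{ju}$ applied to $J$ together with $\abs{J(\z_n^j)}^2=n\leg{j}{n}^2$, which amounts to the same double application of the Gauss sum that you carry out explicitly via inversion.
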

\begin{proof}
Given a polynomial $A(z)=\sum_{j=0}^{n-1}a_jz^j$ with real-valued coefficients, it is readily verified that
\[
\sum_{j=0}^{n-1}a_ja_{(j+u)\bmod n}=\frac{1}{n}\sum_{j=0}^{n-1}\abs{A(\z_n^j)}^2\,\z_n^{ju}.
\]
Applying this relation to the character polynomial $J$ of degree $n-1$ and using~\eqref{eqn:magTe}, then gives 
\[
\sum_{j=0}^{n-1}\leg{j}{n}\leg{j+u}{n}=\sum_{\stack{j=0}{\gcd(j,n)=1}}^{n-1}\z_n^{ju},
\]
which is Ramanujan's sum. The result now follows from Lemma~\ref{lem:Ramanujan}.
\end{proof}
\par
H{\o}holdt and Jensen \cite{Hoholdt1988} introduced a method for
calculating the merit factor of a polynomial of even degree. The following result summarises their method (and occurs as a special case of the slightly more general result of~\cite[Lem.~10]{Schmidt2009}).
\begin{lemma}
\label{lem:sum-ccf}
Let $A\in\R[z]$ be a polynomial of even degree~$n-1$. Define
\begin{equation}
\Lambda_A(j,k,\ell):=\sum_{a=0}^{n-1}A(\z_n^a) \overline{A(\z_n^{a+j})} A(\z_n^{a+k}) \overline{A(\z_n^{a+\ell})} \quad \mbox{for integer $j$, $k$, $\ell$}.   \label{eqn:Lambda}
\end{equation}
Then
\begin{equation}
\frac{\norm{A}_4^4}{n^2}
 = \frac{2n^2+1}{3n^5} \, \Lambda_A(0,0,0) + B + C + D,
\label{eqn:Tst}
\end{equation}
where
\begin{align*}
B &= \frac{2}{n^5} \sum_{k=1}^{n-1} \frac{\Lambda_A(0,0,k) + \z_n^k\,\overline{\Lambda_A(0,0,k)}} {(1-\z_n^k)^2}
     \cdot (1+\z_n^k), \\[1ex]
C & = -\frac{2}{n^5} \sum_{\stack{1 \le k, \ell<n}{k\ne\ell}}
\frac{4\,\z_n^k\,\,\Lambda_A(0,k,\ell)+\Lambda_A(k,0,\ell) + \z_n^k\z_n^\ell \,\,\overline{\Lambda_A(k,0,\ell)}}{(1-\z_n^k)(1-\z_n^{\ell})}, \\[1ex]
D &= \frac{4}{n^5} \sum_{k=1}^{n-1} 
\frac{2\Lambda_A(0,k,k) + \z_n^{-k} \,\Lambda_A(k,0,k)}{\abs{1-\z_n^k}^2}.
\end{align*}
\end{lemma}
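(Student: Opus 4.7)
The plan is to express $\norm{A}_4^4$ in terms of the aperiodic autocorrelations $c_k:=\sum_{j=0}^{n-1-\abs k}a_ja_{j+\abs k}$ of $A(z)=\sum_j a_j z^j$, and then convert these to the DFT side. Parseval gives $\norm{A}_4^4=\sum_{k=-(n-1)}^{n-1}c_k^2=2\sum_{k=0}^{n-1}c_k^2-c_0^2$ since $c_{-k}=c_k$. Inserting the inverse DFT $a_j=\frac{1}{n}\sum_{a=0}^{n-1}A(\z_n^a)\z_n^{-ja}$ reduces the inner sum over $j\in\{0,\ldots,n-1-k\}$ in the definition of $c_k$ to a partial geometric series of length $n-k$, yielding
\[
c_k=\frac{1}{n^2}\sum_{a,b=0}^{n-1}A(\z_n^a)A(\z_n^b)\,\z_n^{-kb}\,T_k(a+b),
\]
where $T_k(m)=n-k$ if $n\mid m$ and $T_k(m)=(1-\z_n^{km})/(1-\z_n^{-m})$ otherwise.

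Squaring $c_k$, using the reality of $A$ via $A(\z_n^m)=\overline{A(\z_n^{-m})}$ to convert two of the four $A$-factors into $\overline{A}$-factors so as to obtain the pattern $A\,\bar A\,A\,\bar A$, and reindexing the four DFT indices by a common shift, produces the inner sum $\Lambda_A(s,t,u)$ defined in~\eqref{eqn:Lambda}. Summing over the autocorrelation index $k$ leaves outer weights of the form $\sum_{k=0}^{n-1}\z_n^{kx}T_k(y)T_k(y')$, which are evaluated via orthogonality of characters together with the partial-fraction identity
\[
\frac{1}{(1-z_1)(1-z_2)}=\frac{1}{z_2-z_1}\left(\frac{1}{1-z_1}-\frac{1}{1-z_2}\right),
\]
collapsing the weights into rational expressions in $\z_n^s$, $\z_n^t$, and $\z_n^u$. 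The exceptional terms in which both copies of $T_k$ take the boundary value $n-k$ force the relevant shift parameters to vanish; evaluating $\sum_{k=0}^{n-1}(n-k)^2=\tfrac{n(n+1)(2n+1)}{6}$ and combining with the $-c_0^2$ correction yields the coefficient $(2n^2+1)/(3n^5)$ of $\Lambda_A(0,0,0)$. The remaining contributions are then classified by the support of the two remaining shift parameters, say $s$ and $u$: the cases where exactly one of $s, u$ vanishes collapse into $B$; the case $s\ne u$ with both nonzero gives $C$; and the diagonal $s=u\ne 0$ gives $D$. The paired combinations $\Lambda_A(\cdot)+\z_n^\bullet\,\overline{\Lambda_A(\cdot)}$ visible in the formulas for $B$ and $C$ arise by combining the $(s,t,u)$ and $(-s,-t,-u)$ contributions using the symmetry $\overline{\Lambda_A(s,t,u)}=\Lambda_A(-s,-t,-u)$ valid for real $A$.

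The principal obstacle is the case analysis at this final stage: segregating the exceptional values of $T_k$ from the generic ones, absorbing both the $-c_0^2$ correction and the $k=0$ slice of $\sum_k c_k^2$ into the main term, tracking the reindexings produced by each application of reality, and verifying that the rational-function sums assemble precisely into the numerators and denominators of $B$, $C$, and $D$ as displayed. The algebra is considerable but essentially mechanical once the reindexing and partial-fraction steps are set up; this is the content of the slightly more general result of~\cite[Lem.~10]{Schmidt2009} that the authors invoke.
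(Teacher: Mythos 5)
The paper does not actually prove this lemma: it quotes it as a special case of \cite[Lem.~10]{Schmidt2009}, whose derivation follows H{\o}holdt and Jensen \cite{Hoholdt1988}. That derivation is different from yours: it writes $\norm{A}_4^4$ as the average of $\abs{A}^4$ over the $2n$th roots of unity, splits these into the $n$th roots and their negatives (this is where evenness of the degree, i.e.\ oddness of $n$, is used, since one needs $1+\z_n^{k}\ne 0$), recovers $A(-\z_n^j)$ from the values $A(\z_n^a)$ by Lagrange interpolation, and then expands the resulting fourfold sums. Your route --- Parseval over the aperiodic autocorrelations, inverse DFT, partial geometric series, then orthogonality in the lag variable --- is a legitimate alternative, and your bookkeeping for the main term is right: $2\sum_{k=0}^{n-1}(n-k)^2-n^2=n(2n^2+1)/3$ does produce the coefficient $(2n^2+1)/(3n^5)$ once the diagonal constraint collapses the fourfold sum to $\Lambda_A(0,0,0)$.

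The gap is that you stop exactly where the content of the lemma begins. The lemma is not the qualitative assertion that $\norm{A}_4^4/n^2$ is some rational combination of the $\Lambda_A(j,k,\ell)$; it is the specific identity with the specific numerators and denominators of $B$, $C$ and $D$, and those are precisely what your plan defers as ``mechanical.'' This deferral is more consequential for your route than for the cited one, because a different derivation naturally yields a differently grouped (though equal) rational expression --- your denominators initially involve $1-\z_n^{-(a+b)}$ in the DFT indices rather than $1-\z_n^{k}$ in the shift parameters of $\Lambda_A$ --- so an explicit reconciliation with the displayed $B$, $C$, $D$ is unavoidable and is not guaranteed to be a term-by-term match. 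You should also identify where, if anywhere, your argument uses that $n$ is odd; as sketched it does not appear to, which means either that the hypothesis must enter at the reconciliation stage or that you are claiming more than the lemma states, and either way it needs to be tracked. As it stands the proposal is a sound reduction of the lemma to a finite computation, but the computation itself --- which is the lemma --- is not carried out.
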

\par
We are now ready to calculate the asymptotic merit factor of the character polynomial at all rotations.
\begin{proof}[Proof of Theorem~\ref{thm:ternary}.]
Without loss of generality, we may assume that $-\frac{1}{2}<r\le\frac{1}{2}$. Since $\norm{J_r}_2^2=\phi(n)$, we have by the definition of the merit factor
\[
\frac{1}{F(J_r)}=\left(\frac{n}{\phi(n)}\right)^2 \left(\frac{\norm{J_r}_4^4}{n^2}\right)-1.
\]
We claim that
\begin{equation}
\frac{\norm{J_r}_4^4}{n^2}=1+\frac{1}{f(r)}+O\big(p_n^{-1}(\log n)^3\big),   \label{eqn:S4Tr-claim}
\end{equation}
which then implies the desired result using the condition~\eqref{eqn:logn2p1} and the growth rate~\eqref{eqn:growth-phi} of $\phi(n)$.
\par
It remains to prove the claim~\eqref{eqn:S4Tr-claim}. Write $R:=\lfloor{nr}\rfloor$. We apply Lemma~\ref{lem:sum-ccf} to the polynomial~$J_r$ to give an expression for $\norm{J_r}_4^4/n^2$. We find the asymptotic form of this expression, evaluating the term involving $\Lambda_{J_r}(0,0,0)$ and the sum~$D$, and bounding the sums $B$ and~$C$.
\par
Using~\eqref{eqn:magTe_rot} and~\eqref{eqn:Lambda}, we have
\begin{equation}
\Lambda_{J_r}(j,k,\ell)=\z_n^{R(j-k+\ell)}\cdot n^2 \sum_{a=0}^{n-1}\leg{a}{n}\leg{a+j}{n}\leg{a+k}{n}\leg{a+\ell}{n}.   \label{eqn:Lambda-leg}
\end{equation}
\par
\begin{description}[font=\itshape]
\item[The term involving $\Lambda_{J_r}(0,0,0)$.]
By~\eqref{eqn:Lambda-leg} we have
\begin{align}
\frac{2n^2+1}{3n^5}\,\Lambda_{J_r}(0,0,0)&=\frac{2n^2+1}{3n^5}\,n^2\,\phi(n)   \nonumber \\[1ex]
&=\frac{2}{3}+O\big(p_n^{-1}\log n\big)   \label{eqn:Asum}
\end{align}
from the growth rate~\eqref{eqn:growth-phi} of~$\phi(n)$.
\par
\item[The sum $D$.]
By~\eqref{eqn:Lambda-leg}, for each $k$ we have
\[
\phi(n)-\psi(n)\le\frac{1}{n^2}\,\Lambda_{J_r}(0,k,k)\le \phi(n).
\]
From the growth rate~\eqref{eqn:growth-phi} of~$\phi(n)$ 
and the growth rate~\eqref{eqn:growth-psi} of~$\psi(n)$ we then obtain
\begin{align*}
\Lambda_{J_r}(0,k,k)&=n^3\big[1+O(p_n^{-1}\log n)\big]
\intertext{and similarly}
\Lambda_{J_r}(k,0,k)&=\z_n^{2Rk}\cdot n^3\big[1+O(p_n^{-1}\log n)\big].
\end{align*}
The sum $D$ then becomes
\begin{equation}
\label{eqn:Dsum1}
D=\frac{4}{n^2}\Big[1+O(p_n^{-1}\log n)\Big]\sum_{k=1}^{n-1}\frac{\,\,2+\z_n^{(2R-1)k}}{\abs{1-\z_n^k}^2}.
\end{equation}
We will evaluate the summation in \eqref{eqn:Dsum1} by using the identity 
\begin{equation}
\label{eqn:exp-sum-identity}
\sum_{k=1}^{n-1} \frac{\z_n^{jk}}{\abs{1-\z_n^k}^2}
=\frac{n^2}{2}\left(\frac{\abs{j}}{n}-\frac{1}{2}\right)^2-\frac{n^2+2}{24}\quad \mbox{for integer $j$ satisfying $\abs{j} \le n$}
\end{equation}
(see, \cite[p.~621]{Jensen1991}, for example). The assumption $-\tfrac{1}{2}<r\le\frac{1}{2}$ implies that $-n < 2R-1 < n$ for all sufficiently large $n$. We can therefore use~\eqref{eqn:exp-sum-identity} to evaluate the summation
in~\eqref{eqn:Dsum1} for all sufficiently large $n$, so that we have
\[
D=\frac{4}{n^2}\big[1+O(p_n^{-1}\log n)\big]\bigg[\frac{n^2}{2}\left(\frac{\abs{2R-1}}{n}-\frac{1}{2}\right)^2+\frac{n^2-2}{8}\bigg].
\]
By definition of $R$, we have $R=nr+O(1)$. We then find that
\begin{equation}
D=\tfrac{1}{2}+8\left(\abs{r}-\tfrac{1}{4}\right)^2+O(p_n^{-1}\log n).
\label{eqn:Dsum}
\end{equation}

\item[The sum $B$.] We bound the sum $B$ via
\begin{align}
\abs{B}&\le\frac{2}{n^5}\sum_{k=1}^{n-1}\frac{\,4 \,\,\bigabs{\Lambda_{J_r}(0,0,k)}}{\abs{1-\z_n^k}^2}   \nonumber \\
&=\frac{8}{n^5}\sum_{k=1}^{n-1}\frac{n^2}{\abs{1-\z_n^k}^2}\,\Biggabs{\sum_{a=0}^{n-1}\leg{a}{n}\leg{a+k}{n}}   \label{eqn:Bsum1}
\end{align}
by~\eqref{eqn:Lambda-leg}. But from Lemma~\ref{lem:RX} we know that
\begin{equation}
\Biggabs{\sum_{a=0}^{n-1}\leg{a}{n}\leg{a+k}{n}}\le\phi(p_n^{-1}n)<\frac{n}{p_n}\quad\mbox{for $k\not\equiv 0\pmod n$}.   \label{eqn:RT-bound}
\end{equation}
Substitution in~\eqref{eqn:Bsum1} gives
\begin{align}
\abs{B}&<\frac{8}{n^2\,p_n}\sum_{k=1}^{n-1}\frac{1}{\abs{1-\z_n^k}^2},   \nonumber \\
&=\frac{2(n^2-1)}{3n^2\,p_n}  \nonumber
\intertext{from~\eqref{eqn:exp-sum-identity}. Hence,}
B &=O(p_n^{-1}).   \label{eqn:Bsum}
\end{align}

\item[The sum $C$.] 
Since $\abs{\Lambda_{J_r}(0,k,\ell)}=\abs{\Lambda_{J_r}(k,0,\ell)}$ by~\eqref{eqn:Lambda-leg}, we can bound the sum~$C$ via
\begin{align}
\abs{C}&\le\frac{2}{n^5}\sum_{\stack{1\le k,\ell<n}{k\ne\ell}}\frac{6\,\,\bigabs{\Lambda_{J_r}(0,k,\ell)}}{\abs{1-\z_n^k}\cdot \abs{1-\z_n^{\ell}}}.   \label{eqn:C-bound}
\end{align}
Now from~\eqref{eqn:Lambda-leg} we have
\begin{align*}
\frac{1}{n^2}\,\bigabs{\Lambda_{J_r}(0,k,\ell)}
&=\Biggabs{\sum_{a=0}^{n-1}\leg{a+k}{n}\leg{a+\ell}{n}
\,\,\,-\!\!\!\sum_{\stack{a=0}{\gcd(a,n)>1}}^{n-1}\leg{a+k}{n}\leg{a+\ell}{n}}\\
&\le \Biggabs{\sum_{a=0}^{n-1}\leg{a}{n}\leg{a+\ell-k}{n}}+\psi(n)\\
&<\frac{n}{p_n}+\psi(n)\quad\mbox{for $k\not\equiv\ell\pmod n$}
\end{align*}
by \eqref{eqn:RT-bound}. Substitution in~\eqref{eqn:C-bound} then gives
\begin{align*}
\abs{C}&<\frac{12}{n^3}\bigg(\frac{n}{p_n}+\psi(n)\bigg)\sum_{\stack{1\le k,\ell<n}{k\ne\ell}}\frac{1}{\abs{1-\z_n^k}\cdot \abs{1-\z_n^{\ell}}}\\
&<\frac{12}{n^3}\bigg(\frac{n}{p_n}+\psi(n)\bigg)\bigg(\sum_{k=1}^{n-1}\frac{1}{\abs{1-\z_n^k}}\bigg)^2\\
&\le \frac{12(\log n)^2}{n}\left(\frac{n}{p_n}+\psi(n)\right)
\end{align*}
since $\sum_{k=1}^{n-1} 1/\abs{1-\z_n^k}\le n\log n$ (see \cite[p.~163]{Hoholdt1988}, for example). 
Then from the growth rate~\eqref{eqn:growth-psi} of~$\psi(n)$ we obtain
\begin{equation}
C=O(p_n^{-1}(\log n)^3).
\label{eqn:Csum}
\end{equation}
\end{description}

The claim~\eqref{eqn:S4Tr-claim} now follows by substituting the 
asymptotic forms 
\eqref{eqn:Asum}, \eqref{eqn:Dsum}, \eqref{eqn:Bsum}, and~\eqref{eqn:Csum}
in~(\ref{eqn:Tst}), and then using the definition~\eqref{eqn:frdefn} of~$f$.
\end{proof}


\section{Proof of Theorem~\ref{thm:all-ones-mainres}}
\label{sec:all-ones}

By Proposition~\ref{prop:L4-sum}, we have
\[
\frac{1}{F(J_r+V_r)}>\left(\frac{\phi(n)}{n}\right)^2\frac{1}{F(J_r)}+\delta(n),
\]
where
\begin{align}
\delta(n)
&=\frac{1}{n^2}\,\norm{V_r}_4^4
-8p_n^{-1/2}n^{-1}(\log n)^{3/2}\,\norm{V_r}_4^2 
-58p_n^{-1/2} (\log n)^{7/2}   \nonumber \\
&=\frac{1}{n^2}\,\norm{V_r}_4^4
+O\big(p_n^{-2}n^{1/2}(\log n)^3\big)
+O\big(p_n^{-1/2}(\log n)^{7/2}\big),   \label{eqn:delta}
\end{align}
using the upper bound~\eqref{eqn:norm_V_r_bound} for $\norm{V_r}_4^4$ and the upper bound~\eqref{eqn:psi-dmultiple} for $\psi(n)$. Therefore
\begin{equation}
\liminf_{n\to\infty}\frac{1}{F(J_r+V_r)}\ge\liminf_{n\to\infty}\left[\left(\frac{\phi(n)}{n}\right)^2\frac{1}{F(J_r)}\right]+\liminf_{n\to\infty}\delta(n).   \label{eqn:liminf_F_delta}
\end{equation}
We next derive a lower bound for the term $\norm{V_r}_4^4/n^2$ in~\eqref{eqn:delta}, giving an asymptotic lower bound for~$\delta(n)$. For a polynomial $A\in\C[z]$ of degree at most $n-1$, we have the identity
\[
\norm{A}_4^4=\frac{1}{2n}\bigg(\sum_{j=0}^{n-1}\abs{A(\z_n^j)}^4+\sum_{j=0}^{n-1}\abs{A(-\z_n^j)}^4\bigg)
\]
(see~\cite{Hoholdt1988}, for example), which gives the inequality
\begin{align}
\frac{1}{n^2}\,\norm{V_r}_4^4&\ge\frac{1}{2n^3}\sum_{j=0}^{n-1}\abs{V_r(\z_n^j)}^4.   \nonumber 
\intertext{Restrict the summation to the set $U=\{ \frac{n}{p_n},2\frac{n}{p_n},\dots,(p_n-1)\frac{n}{p_n}\}$ and use~\eqref{eqn:QA-shift} to obtain}
\frac{1}{n^2}\,\norm{V_r}_4^4&\ge \frac{1}{2n^3}\sum_{u\in U}\abs{V(\z_n^u)}^4.  \label{eqn:S4V-lb}
\end{align}
Now let $u\in U$. From the definition of $V$ we have
\begin{align*}
V(\z_n^u)&=\sum_{\stack{j=0}{\gcd(j,n)>1}}^{n-1}\z_n^{ju}\\
&=\sum_{j=0}^{n-1}\z_n^{ju} \,\,\, -\!\!\!\sum_{\stack{j=0}{\gcd(j,n)=1}}^{n-1}\z_n^{ju}.
\intertext{The first sum evaluates to $0$ because $\z_n^u\ne 1$. The second sum is Ramanujan's sum, and using $\gcd(u,n)=p_n^{-1}n$ in Lemma~\ref{lem:Ramanujan}, we get}
V(\z_n^u)&=\phi\big(p_n^{-1}n\big)=\frac{\phi(n)}{p_n-1}.
\end{align*}
Substitution in \eqref{eqn:S4V-lb} then gives the desired lower bound
\begin{align*}
\frac{1}{n^2}\,\norm{V_r}_4^4&\ge\frac{1}{2n^3}(p_n-1)\left(\frac{\phi(n)}{p_n-1}\right)^4\\
&>\frac{n}{2p_n^3}\left(\frac{\phi(n)}{n} \right)^4.
\end{align*}
By substituting this lower bound in~\eqref{eqn:delta} we find that
\begin{align}
\delta(n)&>\frac{n}{2p_n^3} \left(\frac{\phi(n)}{n}\right)^4
+O\big(p_n^{-2}n^{1/2}(\log n)^3\big)
+O\big(p_n^{-1/2}(\log n)^{7/2}\big),   \label{eqn:deltaLB1} 
\intertext{or equivalently}
\delta(n)&>\frac{n}{2p_n^3}\left[\left(\frac{\phi(n)}{n}\right)^4 
+O\big(p_n n^{-1/2}(\log n)^3\big)
+O\big(p_n^{5/2}n^{-1}(\log n)^{7/2}\big)\right].   \label{eqn:deltaLB2} 
\end{align}
\par
To complete the proof, partition the infinite set $N$, in which $n$ takes values, into subsets $N_1$, $N_2$ defined by
\[
n\in
\begin{cases} N_1 & \mbox{if $p_n\le n^{2/7}$} \\[0.5ex]
              N_2 & \mbox{if $p_n>   n^{2/7}$},
\end{cases}
\]
at least one of which is infinite. First suppose that $N_1$ is infinite and let $n$ take values only in $N_1$. Then
\[
p_nn^{-1/2}(\log n)^3\le n^{-3/14}(\log n)^3\to0
\]
and
\[
p_n^{5/2}n^{-1}(\log n)^{7/2}\le n^{-2/7}(\log n)^{7/2}\to0,
\]
so that by \eqref{eqn:deltaLB2} we obtain
\[
\liminf_{n\to\infty}\delta(n)\ge\liminf_{n\to\infty}\bigg[\frac{n}{2p_n^3}\left(\frac{\phi(n)}{n}\right)^4\bigg].
\]
Choose some $\e$ satisfying $0<\epsilon<1/28$. Since $\phi(n)/n^{1-\e}\to\infty$ (see~\cite[Thm.~327]{Hardy1979}, for example), we have
\[
\liminf_{n\to\infty} \delta(n)\ge\liminf_{n\to\infty}\frac{n^{1-4\epsilon}}{2p_n^3}\ge\frac{1}{2}\liminf_{n\to\infty}\,n^{1/7-4\epsilon}=\infty,
\]
so that by \eqref{eqn:liminf_F_delta},
\[
\liminf_{n\to\infty}\frac{1}{F(J_r+V_r)}=\infty.
\]
This verifies the claim~\eqref{eqn:liminf_all-ones} of the theorem when $n\in N_1$ since $p_n\le n^{2/7}$ for all $n\in N_1$.
\par
Now suppose that $N_2$ is infinite and let $n$ take values only in $N_2$. Then
\[
p_n^{-2} n^{1/2}(\log n)^3 < n^{-1/14}(\log n)^3\to0
\]
and
\[
p_n^{-1/2} (\log n)^{7/2} < n^{-1/7} (\log n)^{7/2}\to0,
\]
so that by \eqref{eqn:deltaLB1} we obtain
\[
\liminf_{n\to\infty}\delta(n)\ge\liminf_{n\to\infty}\bigg[\frac{n}{2p_n^3}\left(\frac{\phi(n)}{n} \right)^4\bigg].
\]
From the growth rate~\eqref{eqn:growth-phi} of $\phi(n)$ and~\eqref{eqn:liminf_F_delta} we then conclude that the claim~\eqref{eqn:liminf_all-ones} of the theorem holds when $n\in N_2$. Therefore it holds when $n\in N_1\cup N_2=N$, which completes the proof.\qed


\section{Proof of Theorem~\ref{thm:upper-bound-mainres}}
\label{sec:upper-bound}

The structure of the proof is broadly similar to that of Theorem~\ref{thm:all-ones-mainres}, except that we now use the condition \eqref{eqn:logn6p1-mainres} to control the term $\norm{V_r}_4^4$ for $V\in\V_n$. Application of Proposition~\ref{prop:L4-sum} gives, for each $V\in\V_n$,
\[
\frac{1}{F(J_r+V_r)}>\left(\frac{\phi(n)}{n}\right)^2\frac{1}{F(J_r)}+\delta(n),
\]
where
\begin{equation}
\delta(n)=\frac{1}{n^2}\,\norm{V_r}_4^4-8p_n^{-1/2}n^{-1}(\log n)^{3/2}\norm{V_r}_4^2-58p_n^{-1/2} (\log n)^{7/2}.
\label{eqn:deltaS4}
\end{equation}
We then find from the growth rate~\eqref{eqn:growth-phi} of~$\phi(n)$, using the condition \eqref{eqn:logn6p1-mainres}, that
\begin{equation}
\liminf_{n\to\infty}\min_{V\in\V_n}\frac{1}{F(J_r+V_r)} \ge 
 \liminf_{n\to\infty}\frac{1}{F(J_r)} 
 +\liminf_{n\to\infty}\delta(n).   \label{eqn:liminfLB}
\end{equation}
We claim that 
\begin{equation}
\liminf_{n\to\infty}\delta(n)=\liminf_{n\to\infty}\frac{1}{n^2}\,\norm{V_r}_4^4,   \label{eqn:n2S4Vr}
\end{equation}
and then, since $\norm{V_r}_4^4\ge 0$, we have from~\eqref{eqn:liminfLB}
\[
\limsup_{n\to\infty}\max_{V\in\V_n}F(J_r+V_r)\le\limsup_{n\to\infty}F(J_r).
\]
Now use Theorem~\ref{thm:ternary} and the condition~\eqref{eqn:logn6p1-mainres} to replace $\limsup_{n\to\infty}F(J_r)$ by~$f(r)$, proving the theorem.
\par
It remains to prove the claim~\eqref{eqn:n2S4Vr}. By the condition \eqref{eqn:logn6p1-mainres}, we obtain from~\eqref{eqn:deltaS4} that
\begin{align}
\liminf_{n\to\infty}\delta(n)&=\liminf_{n\to\infty}\bigg[\frac{1}{n^2}\norm{V_r}_4^4-8p_n^{-1/2}n^{-1}(\log n)^{3/2}\norm{V_r}_4^2\bigg]   \label{eqn:deltaS4LB1}\\
&=\liminf_{n\to\infty}\bigg[\frac{1}{n^2}\norm{V_r}_4^4\bigg(1-\frac{8p_n^{-1/2}n(\log n)^{3/2}}{\norm{V_r}_4^2}\bigg)\bigg].   \label{eqn:deltaS4LB2}
\end{align}
Partition the infinite set $N$, in which $n$ takes values, into subsets $N_1$, $N_2$ defined by
\[
n\in
\begin{cases}
N_1 & \mbox{if $\norm{V_r}_4^4>   p_n^{-1}n^2(\log n)^5$}\\[1ex]
N_2 & \mbox{if $\norm{V_r}_4^4\le p_n^{-1}n^2(\log n)^5$},
\end{cases}
\]
at least one of which is infinite. If $N_1$ is infinite, then for $n\in N_1$ we have
\[
\frac{8p_n^{-1/2} n(\log n)^{3/2}}{\norm{V_r}_4^2}<\frac{8}{\log n}\to0,
\]
so that by \eqref{eqn:deltaS4LB2}, the claim~\eqref{eqn:n2S4Vr} holds when $n$ takes values only in $N_1$. On the other hand, if $N_2$ is infinite, then for $n\in N_2$ we have
\[
8p_n^{-1/2}n^{-1}(\log n)^{3/2}\norm{V_r}_4^2\le 8p_n^{-1}(\log n)^4,
\]
so that by using the condition~\eqref{eqn:logn6p1-mainres} and substituting in~\eqref{eqn:deltaS4LB1} we conclude that~\eqref{eqn:n2S4Vr} holds when $n$ takes values only in $N_2$. Since $n\in N_1\cup N_2=N$, we therefore have established the claim~\eqref{eqn:n2S4Vr}.
\qed


\section{Proof of Theorem~\ref{thm:almost-all-mainres}}
\label{sec:almost-all}

The method of the proof is to apply Proposition~\ref{prop:L4-sum} and bound $\norm{V_r}_4$ for almost all choices $V\in\V_n$, for which we require the following large deviation result (see~\cite[Thm.~A.1.16]{Alon2008}, for example).
\begin{lemma}
\label{lem:hoeffding}
Let $X_1,X_2,\dots,X_{m}$ be mutually independent random variables satisfying $\E(X_j)=0$ and $\abs{X_j}\le 1$ for $1\le j\le m$. Then, for real $a\ge 0$,
\[
\Pr\bigg(\biggabs{\sum_{j=1}^m X_j\,}^2\ge a\bigg)\le 2\,e^{-\frac{a}{2m}}.
\]
\end{lemma}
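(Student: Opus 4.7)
The plan is to reduce the claim to the standard one-sided Chernoff--Hoeffding tail bound. Writing $S := \sum_{j=1}^m X_j$ and setting $t := \sqrt{a}$, squaring shows that the lemma is equivalent to the real-variable inequality $\Pr(\abs{S} \ge t) \le 2\,e^{-t^2/(2m)}$ for every $t \ge 0$, so it suffices to establish this form of the bound.

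For the one-sided estimate I would apply the exponential Markov (Chernoff) trick: for any $\lambda > 0$, by independence,
\[
\Pr(S \ge t) \;\le\; e^{-\lambda t}\,\E[e^{\lambda S}] \;=\; e^{-\lambda t}\prod_{j=1}^m \E[e^{\lambda X_j}].
\]
Each factor is then controlled via Hoeffding's lemma: for a mean-zero random variable $Y$ with $\abs{Y}\le 1$, the convexity bound $e^{\lambda y} \le \tfrac{1-y}{2}e^{-\lambda}+\tfrac{1+y}{2}e^{\lambda}$ on the interval $[-1,1]$, combined with $\E[Y]=0$, gives $\E[e^{\lambda Y}] \le \cosh(\lambda) \le e^{\lambda^2/2}$, where the last inequality follows from a coefficient-by-coefficient comparison of Taylor series. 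Substituting back and choosing the optimum $\lambda = t/m$ yields $\Pr(S\ge t) \le \exp(m\lambda^2/2 - \lambda t) = e^{-t^2/(2m)}$.

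Applying the same argument to the random variables $-X_1,\dots,-X_m$, which satisfy the same hypotheses, produces the symmetric bound $\Pr(S\le -t) \le e^{-t^2/(2m)}$. A union bound over the two tails gives $\Pr(\abs{S}\ge t) \le 2\,e^{-t^2/(2m)}$, and undoing the substitution $a = t^2$ recovers the stated inequality.

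No serious obstacle is anticipated: every step is textbook, and the only piece requiring any real manipulation is Hoeffding's lemma itself, which is a short convexity calculation. This is precisely why the result is invoked as a citation to~\cite[Thm.~A.1.16]{Alon2008}; the sketch above merely records that it is a standard consequence of the Chernoff method rather than something the paper needs to re-derive in detail.
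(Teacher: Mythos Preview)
Your argument is correct and is the standard Chernoff--Hoeffding proof; the paper itself does not supply a proof of this lemma at all, simply citing it from \cite[Thm.~A.1.16]{Alon2008}, so your sketch is exactly the textbook derivation that the citation points to.
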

\par
We next use Lemma~\ref{lem:hoeffding} to give an upper bound for $\norm{V_r}_4$ for almost all $V\in\V_n$.
\begin{lemma}
\label{lem:Pr-V}
Let $V$ be drawn uniformly from $\V_n$ and let $r$ be real. Then, as $n\to\infty$,
\[
\Pr\Big(\norm{V_r}_4^4<288[\psi(n)]^2\log n\Big)\to 1.
\]
\end{lemma}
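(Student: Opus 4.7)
The strategy is to combine the $L_4$-norm identity at roots of unity used in Section~\ref{sec:all-ones} with a union-bound application of Hoeffding's inequality (Lemma~\ref{lem:hoeffding}). Since $V$ is drawn uniformly from $\V_n$, its nonzero coefficients are mutually independent and uniform on $\{-1,+1\}$; the same is then true of the $\psi(n)$ nonzero coefficients of $V_r$, which merely occupy a cyclically shifted support.

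First I would reduce $\norm{V_r}_4^4$ to the maximum modulus of $V_r$ on the $2n$th roots of unity. Setting $M:=\max_{0\le k<n}\max\big(\abs{V_r(\z_n^k)}^2,\abs{V_r(-\z_n^k)}^2\big)$ and using the identity
\[
\norm{V_r}_4^4=\frac{1}{2n}\bigg(\sum_{k=0}^{n-1}\abs{V_r(\z_n^k)}^4+\sum_{k=0}^{n-1}\abs{V_r(-\z_n^k)}^4\bigg)
\]
from Section~\ref{sec:all-ones}, together with $\abs{V_r(z)}^4\le M\abs{V_r(z)}^2$ at each of these points, and Parseval's identity for the $2n$-point DFT (which gives $\sum_{k=0}^{n-1}\abs{V_r(\z_n^k)}^2+\sum_{k=0}^{n-1}\abs{V_r(-\z_n^k)}^2=2n\psi(n)$), one obtains
\[
\norm{V_r}_4^4\le M\,\psi(n).
\]

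Next I would control $M$ by Hoeffding. Fix $k$ and write $V_r(\z_n^k)=\sum_jv'_j\z_n^{jk}$, where the $v'_j$ are the $\psi(n)$ independent uniform $\pm1$ variables supported on the shifted index set. Splitting into real and imaginary parts yields two sums of $\psi(n)$ independent zero-mean random variables each bounded by~$1$, so Lemma~\ref{lem:hoeffding} applied to each part and a union bound over the two parts gives
\[
\Pr\big(\abs{V_r(\z_n^k)}^2\ge 2a\big)\le 4e^{-a/(2\psi(n))}\quad\text{for every $a\ge0$;}
\]
the identical bound holds at $-\z_n^k$, since the coefficients of $V_r(-z)$ are $(-1)^jv'_j$, still independent zero-mean with modulus at most~$1$. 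A union bound over the $2n$ points then gives $\Pr(M\ge 2a)\le 8n\,e^{-a/(2\psi(n))}$, and taking $a=144\,\psi(n)\log n$ makes the probability at most $8n^{-71}\to0$. With the reduction from the previous paragraph, this shows $\norm{V_r}_4^4<288[\psi(n)]^2\log n$ with probability tending to~$1$. The argument is essentially routine; the only mild subtlety is realising that the factor $2n$ in the $L_4$ identity is exactly compensated by Parseval at the $2n$th roots of unity, so no additional $\log n$ factor from Lemma~\ref{lem:poly-bound} is needed, and the generous constant~$288$ leaves ample slack for the downstream application in Section~\ref{sec:almost-all}.
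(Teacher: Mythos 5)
Your proof is correct, and the probabilistic core (splitting $V_r$ at each evaluation point into real and imaginary parts, applying Lemma~\ref{lem:hoeffding} to each with $m=\psi(n)$, and taking a union bound over the evaluation points) is exactly what the paper does. Where you genuinely diverge is in the deterministic reduction of $\norm{V_r}_4^4$ to a maximum over roots of unity. The paper uses \eqref{eqn:L4L2max} together with the Bernstein-type bound $\max_{\abs{z}=1}\abs{A(z)}\le 6\max_{0\le j<4n}\abs{A(\z_{4n}^j)}$ from Spencer, obtaining $\norm{V_r}_4^4\le 36\psi(n)\max_{0\le j<4n}\abs{V_r(\z_{4n}^j)}^2$ and then running Hoeffding at threshold $8\psi(n)\log n$ over $4n$ points (so $288=36\cdot 8$). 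You instead use the exact $2n$-point interpolation identity for $\norm{\cdot}_4^4$ together with Parseval at the $2n$th roots of unity (valid since $n$ is odd, so $\{\pm\z_n^k\}$ is the full set of $2n$th roots of unity), which gives the cleaner bound $\norm{V_r}_4^4\le\psi(n)\max\abs{V_r}^2$ over only $2n$ points with no absolute constant. This avoids the external appeal to Bernstein's inequality and would in fact prove the lemma with a constant much smaller than $288$; you only use the full allowance $a=144\psi(n)\log n$ to match the stated bound, whence your generous $8n^{-71}$ failure probability versus the paper's $16n^{-1}$. Both versions are more than adequate for the application in Section~\ref{sec:almost-all}.
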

\begin{proof}
Given a polynomial $A\in\C[z]$ of degree at most $n-1$, it is a simple consequence of Bernstein's inequality that
\[
\max_{\abs{z}=1}\;\abs{A(z)}\le 6\max_{0\le j<4n}\;\abs{A(\z_{4n}^j)}
\]
(see~\cite[p.~691]{Spencer1985}). Therefore, by~\eqref{eqn:L4L2max},
\[
\norm{V_r}_4^4\le 36\psi(n)\max_{0\le j<4n}\;\abs{V_r(\z_{4n}^j)}^2.
\]
Hence, it is sufficient to show that
\begin{equation}
\Pr\Big(\max_{0\le j<4n}\abs{V_r(\z_{4n}^j)}^2<8\psi(n)\log n\Big)\to 1.   \label{eqn:max_V}
\end{equation}
Write $a(n)=8\psi(n)\log n$. A crude estimate gives
\begin{align}
\lefteqn{\Pr\Big(\max_{0\le j<4n}\abs{V_r(\z_{4n}^j)}^2 \ge a(n)\Big)\le \sum_{j=0}^{4n-1}\Pr\Big(\abs{V_r(\z_{4n}^j)}^2 \ge a(n)\Big)}   \nonumber\\
&\qquad\qquad\le\sum_{j=0}^{4n-1}\bigg[\Pr\Big(\bigabs{\Re\big(V_r(\z_{4n}^j)\big)}^2\ge\tfrac{1}{2}a(n)\Big)+\Pr\Big(\bigabs{\Im\big(V_r(\z_{4n}^j)\big)}^2\ge\tfrac{1}{2}a(n)\Big)\bigg].   \label{eqn:PrVr}
\end{align}
Write $V\in\V_n$ as $V(z)=\sum_{k=0}^{n-1}v_kz^k$ and note that $v_k=0$ if and only if $\gcd(k,n)=1$. Then we have by the definition of the rotation $V_r$,
\[
V_r(z)=\sum_{\stack{\ell=0}{\gcd(\ell,n)>1}}^{n-1}v_{\ell}\,z^{k(\ell)},
\]
where $k(\ell)=(\ell-\lfloor nr\rfloor)\bmod n$. Let $\lambda\in\C$ be such that $\abs{\lambda}\le 1$. Then
\begin{align*}
\Pr\Big(\bigabs{\Re\big(V_r(\lambda)\big)}^2 \ge \tfrac{1}{2}a(n)\Big)&=\Pr\Bigg(\Biggabs{\sum_{\stack{\ell=0}{\gcd(\ell,n)>1}}^{n-1}v_\ell\,\Re(\lambda^{k(\ell)}\big)}^2 \ge \tfrac{1}{2}a(n)\Bigg)\\
&\le 2 e^{-\tfrac{1}{2\psi(n)}\cdot\tfrac{a(n)}{2}}
\end{align*}
by application of Lemma~\ref{lem:hoeffding}. By definition of $a(n)$ we then obtain
\begin{align*}
\Pr\Big(\bigabs{\Re\big(V_r(\lambda)\big)}^2\ge\tfrac{1}{2}a(n) \Big)&\le 2n^{-2},
\intertext{and by similar reasoning}
\Pr\Big(\bigabs{\Im\big(V_r(\lambda)\big)}^2 \ge \tfrac{1}{2}a(n)\Big)&\le 2n^{-2}.
\end{align*}
Substitution in \eqref{eqn:PrVr} then gives
\[
\Pr\Big(\max_{0\le j<4n}\abs{V_r(\z_{4n}^j)}^2\ge a(n)\Big)\le 16n^{-1},
\]
which implies~\eqref{eqn:max_V}, as required.
\end{proof}
\par
We now use Lemma~\ref{lem:Pr-V} to prove Theorem~\ref{thm:almost-all-mainres}.
\begin{proof}[Proof of Theorem~\ref{thm:almost-all-mainres}]
Define a subset $\U_n$ of $\V_n$ by
\begin{equation}
\U_n:=\left\{V\in\V_n:\norm{V_r}_4^4<288p_n^{-2}n^2(\log n)^3\right\}.   \label{eqn:def-U}
\end{equation}
Using the upper bound~\eqref{eqn:psi-dmultiple} for $\psi(n)$, Lemma~\ref{lem:Pr-V} implies that
\begin{equation}
\frac{\abs{\U_n}}{\abs{\V_n}}\to 1.   \label{eqn:UV}
\end{equation}
By the triangle inequality,
\begin{equation}
\Biggabs{\frac{1}{F(J_r+V_r)}-\frac{1}{f(r)}}\le\Biggabs{\frac{1}{F(J_r+V_r)}-\left(\frac{\phi(n)}{n}\right)^2\frac{1}{F(J_r)}}+\Biggabs{\left(\frac{\phi(n)}{n}\right)^2\frac{1}{F(J_r)}-\frac{1}{f(r)}}.   \label{eqn:triangle}
\end{equation}
Using the condition~\eqref{eqn:logn6p1-2-mainres} and the growth rate~\eqref{eqn:growth-phi} of $\phi(n)$, we find from Theorem~\ref{thm:ternary} that
\begin{equation}
\Biggabs{\left(\frac{\phi(n)}{n}\right)^2\frac{1}{F(J_r)}-\frac{1}{f(r)}}\to 0.   \label{eqn:triangle_term1}
\end{equation}
From Proposition~\ref{prop:L4-sum} we have
\begin{equation}
\Biggabs{\frac{1}{F(J_r+V_r)}-\left(\frac{\phi(n)}{n}\right)^2\!\!\frac{1}{F(J_r)}}<\gamma(n)\quad\mbox{for $V\in\U_n$},   \label{eqn:triangle_term2}
\end{equation}
where
\begin{align*}
\gamma(n)&=\max_{V\in\U_n}\bigg(\frac{1}{n^2}\,\norm{V_r}_4^4+8p_n^{-1/2}n^{-1}(\log n)^{3/2}\norm{V_r}_4^2+58p_n^{-1/2}(\log n)^{7/2}\bigg)   \nonumber\\
&<8p_n^{-2}(\log n)^3+\sqrt{512}\,p_n^{-3/2}(\log n)^3+58p_n^{-1/2}(\log n)^{7/2},
\end{align*}
by the definition~\eqref{eqn:def-U} of $\U_n$. Using the condition~\eqref{eqn:logn6p1-2-mainres}, we have $\gamma(n)\to0$. Since $\U_n$ forms a set of measure~$1$ within $\V_n$ by~\eqref{eqn:UV}, we find by substitution of~\eqref{eqn:triangle_term1} and~\eqref{eqn:triangle_term2} into~\eqref{eqn:triangle} that
\[
\Biggabs{\frac{1}{F(J_r+V_r)}-\frac{1}{f(r)}}\to 0\quad\mbox{in probability}.
\]
Since $f(r)$ takes values only in a finite interval bounded away from~$0$, we then have
\[
\abs{F(J_r+V_r)-f(r)}\to 0\quad\mbox{in probability},
\]
which completes the proof.
\end{proof}


\section{Proof of Theorem~\ref{thm:product-mainres}}
\label{sec:typical}

From Proposition~\ref{prop:L4-sum} we have
\begin{equation}
\Biggabs{\frac{1}{F(J_r+V_r)}-\left(\frac{\phi(n)}{n}\right)^2\!\!\frac{1}{F(J_r)}}<\gamma(n),   \label{eqn:atmost-gamma-2}
\end{equation}
where 
\begin{equation}
\gamma(n)=\frac{1}{n^2}\,\norm{V_r}_4^4
    + 8p_n^{-1/2}n^{-1}(\log n)^{3/2}\norm{V_r}_4^2 
    + 58p_n^{-1/2}(\log n)^{7/2}.   \label{eqn:gamma-2}
\end{equation}
We also have from~\eqref{eqn:L4L2max}, Lemma~\ref{lem:poly-bound},~\eqref{eqn:QA-shift}, and the upper bound~\eqref{eqn:psi-dmultiple} for $\psi(n)$,
\begin{equation}
\norm{V_r}_4^4\le(2\log n)^2\Big(\max_{0\le k<n}\bigabs{V(\z_n^k)}^2\Big)p_n^{-1}n\log n.   \label{eqn:S4Vr2logn}
\end{equation}
We now bound the term $\abs{V(\z_n^k)}$. By definition of $V$, we have for integer $k$,
\begin{align*}
V(\z_n^k) 
&= \sum_{\stack{j=0}{\gcd(j,n)>1}}^{n-1} 
     \bigleg{j}{\tfrac{n}{\gcd(j,n)}}\, \z_n^{kj} \\
&= \sum_{\stack{0<m<n}{m \mid n}}
     \sum_{\stack{\ell=0}{\gcd(\ell,m)=1}}^{m-1}
     \bigleg{\tfrac{\ell n}{m}}{m}\, \z_m^{k\ell} 
\intertext{by putting $m = n/\gcd(j,n)$, so that we must have
$j=\ell n/m$ where, since $n$ is square-free, $0\le\ell<m$ and $\gcd(\ell,m)=1$. Since the Jacobi symbol is multiplicative, and $\leg{\ell}{m}=0$ for $\gcd(\ell,m)>1$, we then have}
V(\z_n^k) 
&=\sum_{\stack{0<m<n}{m \mid n}} \bigleg{\tfrac{n}{m}}{m}
    \sum_{\ell=0}^{m-1} \leg{\ell}{m}\, \z_m^{k\ell},
\intertext{and therefore}
\abs{V(\z_n^k)}
&\le \sum_{\stack{0<m<n}{m\mid n}}\biggabs{\sum_{\ell=0}^{m-1} \leg{\ell}{m}\,\z_m^{k\ell}}\\
&\le \sum_{\stack{0<m<n}{m \mid n}} m^{1/2}
\intertext{by Lemma~\ref{lem:gauss-sum}. Hence,}
\bigabs{V(\z_n^k)}
&\le \sum_{j=1}^{\omega(n)}\binom{\omega(n)}{j}\left(\frac{n}{p_n^j}\right)^{1/2}\\
&< n^{1/2}\big(1+p_n^{-1/2}\big)^{\omega(n)} \\
&\le n^{1/2}\big(1+(\log n)^{-7/2}\big)^{\log n} 
\end{align*}
for all sufficiently large~$n$, by \eqref{eqn:logn6p1-3-mainres}
and~\eqref{eqn:wnlogn}. Therefore
\[
\abs{V(\z_n^k)} = O(n^{1/2}).
\]
Substitute in~\eqref{eqn:S4Vr2logn} to give
\[
\norm{V_r}_4^4=O\big(p_n^{-1}n^2(\log n)^3\big),
\]
and then substitute in~\eqref{eqn:gamma-2} to show that
\[
\gamma(n)=O\big(p_n^{-1}(\log n)^3\big)+O\big(p_n^{-1}(\log n)^3\big)+O\big(p_n^{-1/2}(\log n)^{7/2}\big)\to0,
\]
by the condition~\eqref{eqn:logn6p1-3-mainres}. The required result then follows from~\eqref{eqn:atmost-gamma-2} and Theorem~\ref{thm:ternary}, using the growth rate~\eqref{eqn:growth-phi} of~$\phi(n)$ and the condition~\eqref{eqn:logn6p1-3-mainres}.
\qed


\section{Proof of Theorem~\ref{thm:twoprimes-mainres}}
\label{sec:two-primes}

Let $V\in\V_n$. From Proposition~\ref{prop:L4-sum} we have
\begin{equation}
\Biggabs{\frac{1}{F(J_r+V_r)}-\left(\frac{\phi(n)}{n}\right)^2\!\!\frac{1}{F(J_r)}}<\gamma(n),   \label{eqn:atmost-gamma-3}
\end{equation}
where
\[
\gamma(n)
=\frac{1}{n^2}\,\norm{V_r}_4^4
    + 8p_n^{-1/2}n^{-1}(\log n)^{3/2}\norm{V_r}_4^2 
    + 58p_n^{-1/2}(\log n)^{7/2}.
\]
From the upper bound~\eqref{eqn:norm_V_r_bound} for~$\norm{V_r}_4^4$ and the upper bound~\eqref{eqn:psi-omega} for $\psi(n)$ we have $\norm{V_r}_4^4\le (2n/p_n)^3$ for all sufficiently large $n$ since the condition~\eqref{eqn:cond-2primes-mainres} forces $\w(n)\le 2$ for all sufficiently large~$n$. Hence
\[
\gamma(n)
=O\big(p_n^{-3}n\big) 
    + O\big(p_n^{-2}n^{1/2}(\log n)^{3/2}\big)
    + O\big(p_n^{-1/2}(\log n)^{7/2}\big).
\]
By the condition~\eqref{eqn:cond-2primes-mainres} we then have $\gamma(n)\to0$, and the required result follows from \eqref{eqn:atmost-gamma-3} and Theorem~\ref{thm:ternary}, using the growth rate~\eqref{eqn:growth-phi} of~$\phi(n)$ and the condition~\eqref{eqn:cond-2primes-mainres}.
\qed

\providecommand{\bysame}{\leavevmode\hbox to3em{\hrulefill}\thinspace}
\providecommand{\MR}{\relax\ifhmode\unskip\space\fi MR }
\providecommand{\MRhref}[2]{%
  \href{http://www.ams.org/mathscinet-getitem?mr=#1}{#2}
}
\providecommand{\href}[2]{#2}


\end{document}